\theoremstyle{plain}
\newtheorem{thm}{Theorem}[section]
\newtheorem{corollary}[thm]{Corollary}
\newtheorem{proposition}[thm]{Proposition}
\newtheorem{lemma}[thm]{Lemma}
\newtheorem{example}[thm]{Example}
\newtheorem{remark}[thm]{Remark}
\newtheorem{prop}[thm]{Proposition}
\newtheorem{variant}[thm]{Variant}
\theoremstyle{definition}
\newtheorem{definition}[thm]{Definition}
\newcommand\Hom		{\mathrm{Hom}}
\newcommand\dSet	{\text{dSet}}
\newcommand\Set     {\text{Set}}
\newcommand\sSet    {\text{sSet}}
\newcommand\Ab      {\mathrm{AbGr}}
\newcommand\calS    {\mathcal S}
\newcommand\Spec    {\calS\!\text{p}} 
\newcommand\Symm    {\mathcal{S}\text{ym}\mathcal{M}\text{on}\mathcal{C}\text{at}}
\newcommand\Symg    {\mathcal{S}\text{ym}\mathcal{M}\text{on}\mathcal{G}\text{rpd}}
\newcommand\Oper    {\text{Oper}}
\newcommand\sOper    {\text{sOper}}
\newcommand\indlim  {\underrightarrow{\lim}}
\newcommand\iso     {\stackrel{\sim}{\longrightarrow}}
\newcommand\calK    {\mathcal{K}}
\newcommand\EsSet   {E_\infty\text{-}\mathcal{S}\text{paces}}
\title{Algebraic K-Theory of $\infty$-Operads}
\author{Thomas Nikolaus}
\begin{document}

\maketitle

\begin{abstract} \noindent
The theory of dendroidal sets has been developed to serve as a combinatorial
model for homotopy coherent operads, see \cite{MoerW07, MC11}. An
$\infty$-operad is a dendroidal set $D$ satisfying certain lifting conditions.
 
In this paper we give a definition of K-groups $K_n(D)$ for a dendroidal set
$D$. These groups generalize the K-theory of symmetric monoidal (resp.
permutative) categories and algebraic K-theory of rings.  We establish some
useful properties like invariance under the appropriate equivalences and long
exact sequences which allow us to compute these groups in some examples. Using
results from \cite{Heuts2} and \cite{BaNi} we show that the $K$-theory groups of
$D$ can be realized as homotopy groups of a K-theory spectrum $\mathcal{K}(D)$. 
\end{abstract}

\section{Introduction}

Operads are an important tool in modern mathematics, especially in topology and
algebra \cite{marklOp}. Throughout this paper we use the term `operad' for what
should really be called a `coloured, symmetric operad' or maybe even better
a `symmetric multicategory'. In order to make clear what is meant let us briefly
recall that an operad $P$ in this sense is given by a set of colours
$\{a,b,c,...\}$, sets of operations $P(a_1,...,a_n ; b)$ equipped with $\Sigma_n$-actions, and composition maps.
Clearly classical non-coloured operads (i.e. coloured operads which have only
one colour) are an important special case. But there is another class of
examples which sit at the other end of the spectrum of coloured
operads; namely small symmetric monoidal categories $C$, which we
consider as operads with colours the objects of $C$ and operations
$C(a_1,...,a_n; b) := \Hom_C(a_1 \otimes ... \otimes a_n; b)$. 

The idea is that $\infty$-operads are both higher categorical and homotopy coherent 
versions of ordinary operads. There are several ways of making this idea precise.
The easiest model, which has been successfully used in topology for a long time
\cite{BoarVogt, May}, are topologically enriched operads, i.e. the sets
of operations $P(a_1,...,a_n; c)$ are replaced by topological spaces. But there
are other models which are technically more convenient. One model has been given
by Lurie \cite[Section 2]{HA} and another one by Moerdijk and Weiss \cite{MoerW07}. We
will restrict our attention in this paper to the latter model which goes by the
name of dendroidal sets. But all models are (at least conjecturally) equivalent,
so the results hold independently and should in principle have proofs in all
settings. We review the theory of dendroidal sets in Section \ref{sec_pre}. \\

In this paper we introduce abelian groups $K_n(D)$ for an $\infty$-operad $D$
which we call the K-theory groups of $D$. The zeroth group $K_0(D)$ can be defined
very explicitly using generators and relations, see Section \ref{sec_k0}. For
the higher groups we have to make use of homotopy theoretic methods. More
precisely we use a model structure on the category of dendroidal sets which was
introduced in \cite{BaNi}. By means of this model structure we can define for
every dendroidal set $D$ a `derived underlying space' whose homotopy groups are
the groups $K_n(D)$ (Section \ref{sec_higher}). 

We show that these groups are invariant under equivalences of $\infty$-operads
and that they admit long exact sequences coming from cofibre sequences of
$\infty$-operads. Using these properties one can already compute the $K$-groups
for basic cases. For example, for (a dendroidal version of) an
$E_\infty$-operad we show that $K_n(E_\infty) = 0$ for all $n$. Another easy
example of an $\infty$-operad is $\eta$, the trivial operad without higher operations; for this $\infty$-operad we
show that the $K$-groups are given by the stable homotopy groups of spheres.
More generally, we treat the example $\Omega[T]$, the $\infty$-operad associated to a tree $T$, and 
show that
$K_n\big(\Omega[T]\big) \cong \bigoplus_{\ell(T)}\pi_n^\mathcal{S}$, where $\ell(T)$ is the number 
of leaves of the tree $T$ and $\pi_n^\mathcal{S}$ are the stable homotopy groups of spheres (Corollary \ref{komega}).

It has been sketched by Heuts \cite{Heuts2} how to associate an infinite
loop space, i.e. a connective spectrum, to a dendroidal set $D$. We use a slight
variant of his construction to define what we call the algebraic K-theory
spectrum $\calK(D)$ of an $\infty$-operad (Section \ref{sec_spec}). We show that
the homotopy groups of this spectrum agree with our $K$-theory groups $K_n(D)$
(Theorem \ref{prop53}). In some cases we can identify this spectrum. For example
every simplicial set $X$ gives rise to a dendroidal set $i_!X$. For this case we
can show that the associated spectrum is the suspension spectrum of the
geometric realization of $X$. The main result of \cite{BaNi} even implies that
the functor $D \mapsto \calK(D)$ induces an equivalence between a suitable
localization of the category of $\infty$-operads and the homotopy category of
connective spectra. In particular all connective spectra arise as $\calK(D)$ for
some $\infty$-operad $D$. \\

Finally we want to explain why we have decided to call these invariants $K$-groups (and $K$-theory spectra). Recall that, by definition, algebraic
$K$-theory of a ring $R$ is computed using its category of finitely generated
projective modules (or some related space like $BGL(R)$). There are several
equivalent variants that produce the $K$-theory groups $K_n(R)$ and a $K$-theory
spectrum $\mathcal{K}(R)$ from this category. The whole theory was  initiated by
Quillen \cite{QuillenK}, but see also \cite{th82}. We have already explained in the
first paragraph that a symmetric monoidal category can be considered as an
operad and thus also as an $\infty$-operad. For the groupoid of finitely
generated projective modules over a ring $R$ this $\infty$-operad is denoted by
$N_d\text{Proj}_R$. We then show in Theorem \ref{generalization} that the
$K$-theory of this $\infty$-operad is equivalent to the algebraic $K$-theory of
the ring $R$:
\begin{equation*}
K_n(N_d\text{Proj}_R) \cong K_n(R) \quad \text{ and } \quad
\calK(N_d\text{Proj}_R) \cong \calK(R)\ .
\end{equation*}
In this sense our $K$-theory generalizes the algebraic $K$-theory of rings
(resp. symmetric monoidal categories) and therefore deserves to be called
$K$-theory. Moreover, it is shown in \cite{Heuts1} that not only ordinary
symmetric monoidal categories can be seen as dendroidal sets, but also symmetric
monoidal $\infty$-categories, i.e. $E_\infty$-algebras in the $\infty$-category of
$\infty$-categories (modeled by the Joyal model structure on simplicial sets).
In this sense the $K$-theory of dendroidal sets contains as a special case the
$K$-theory of symmetric monoidal $\infty$-categories.  \\

\noindent {\bf Acknowledgements.} The author would like to thank Matija
Ba\v{s}i\'{c}, David Gepner, Gijs Heuts and Ieke Moerdijk for helpful
discussions and Peter Arndt, Matija Ba\v{s}i\'{c} and David Gepner for comments on the draft.

\section{Preliminaries about dendroidal sets}\label{sec_pre}

In this section we  briefly recall the theory of dendroidal sets as
discussed in \cite{MoerW07, MW09, MC10, MC11, MC11a}. Dendroidal sets are
a generalization of simplicial sets. Recall that simplicial sets are
presheaves on the category $\Delta$ of finite, linearly ordered sets, i.e. $\sSet =
[\Delta^{op}, \text{Set}]$. 
Dendroidal sets are similarly a presheaf category on an extension of the simplicial category $\Delta$ 
to the dendroidal category $\Omega$.

The category $\Omega$ is defined as follows. The objects are given by finite,
rooted trees; that is, graphs with no loops equipped with a distinguished outer
edge called the root and a (possibly empty) set of outer edges not containing
the root called leaves. As an example consider the tree:  \\
\begin{equation} \label{exampletree}
T= 
\begin{minipage}{2.5cm}

\xymatrix@R=10pt@C=12pt{
 & &  & \\
& *=0{~~\bullet_{~v}}  \ar@{-}[ul]^-a \ar@{-}[ur]^-b && \\
&&  *=0{~~\bullet_w} \ar@{-}[ul]^-c \ar@{-}[ur]^-d & \\
&&  \ar@{-}[u]^e &
}

\end{minipage} 
\end{equation}

We also allow the tree which only consist of a single edge without vertices. For this tree the root is also a leaf, in contrast to the  requirement that the leaves are distinct from the root. In particular, this tree should be carefully distinguished from the other tree with one edge
$$T = \xymatrix@R=10pt@C=12pt{
*=0{\bullet}  \ar@{-}[d]^a  \\
~
} $$
which has no leaves.

Every finite, rooted tree $T$ generates a coloured symmetric operad  $\Omega(T)$ as follows. The set of colours is given by the edges
of $T$. In Example \eqref{exampletree} this is the set $\{a,b,c,d,e\}$. The operations are
freely generated by the vertices of $T$. In Example \eqref{exampletree} there is one generating
operation $v \in \Omega(T)(a,b;c)$ and another one $w \in \Omega(T)(c,d;e)$.
These operations of course generate other operations such as $w \circ_{c} v \in
\Omega(T)(a,b,d;e)$, the permutations $\sigma v \in \Omega(T)(b,a;c)$, $\sigma w
\in \Omega(T)(d,c;e)$, and the six possible permutations of $w \circ_{c} v $.

Now we can complete the definition of the category $\Omega$, whose objects are all finite rooted trees $T$, by specifying the morphisms.  A morphism $T \to S$  in $\Omega$ is a morphism of 
coloured symmetric operads $\Omega(T)\to \Omega(S)$ (for the definition of operad morphisms see e.g. \cite[Section 2]{MoerW07}). Thus, the category $\Omega$ is, by definition, a full subcategory of the category of coloured operads. Examples of
morphisms are shown in the next picture: \\
\begin{minipage}{1.5cm}
$
\xymatrix@R=10pt@C=12pt{
 &  & \\
& *=0{\,\,\,\bullet_v}  \ar@{-}[ul]^-a \ar@{-}[ur]^-b&  \\
&   \ar@{-}[u]^-c  & \\
}
$
\end{minipage}
$\rightarrow$
\begin{minipage}{2.5cm}
$
\xymatrix@R=10pt@C=12pt{
 & &  & \\
& *=0{~\,\,\bullet_{\,\,v}}  \ar@{-}[ul]^-a \ar@{-}[ur]^-b && \\
&&  *=0{~~\bullet_w} \ar@{-}[ul]^-c \ar@{-}[ur]^-d & \\
&&  \ar@{-}[u]^e &
}
$
\end{minipage}
\hspace{1.3cm}
\begin{minipage}{1.5cm}
$
\xymatrix@R=10pt@C=12pt{
 &  & \\
& *=0{\quad~ \bullet_{w \circ v}}  \ar@{-}[ul]^a \ar@{-}[ur]^d \ar@{-}[u]^{ b}& 
\\
&   \ar@{-}[u]^-e  & \\
}
$
\end{minipage}
$\rightarrow$
\begin{minipage}{2.5cm}
$
\xymatrix@R=10pt@C=12pt{
 & &  & \\
& *=0{~\,\,\bullet_{\,\,v}}  \ar@{-}[ul]^-a \ar@{-}[ur]^-b && \\
&&  *=0{~~\bullet_w} \ar@{-}[ul]^-c \ar@{-}[ur]^-d & \\
&&  \ar@{-}[u]^e &
}
$
\end{minipage}
\hspace{1.3cm}
\begin{minipage}{1.5cm}
$
\xymatrix@R=10pt@C=12pt{
 &  & \\
& *=0{~~\bullet_w}  \ar@{-}[ul]^-c \ar@{-}[ur]^-d&  \\
&   \ar@{-}[u]^-e  & \\
}
$
\end{minipage}
$\rightarrow$
\begin{minipage}{2.5cm}
$
\xymatrix@R=10pt@C=12pt{
 & &  & \\
& *=0{~\,\,\bullet_{\,\,v}}  \ar@{-}[ul]^-a \ar@{-}[ur]^-b && \\
&&  *=0{~~\bullet_w} \ar@{-}[ul]^-c \ar@{-}[ur]^-d & \\
&&  \ar@{-}[u]^e &
}
$
\end{minipage}
Here the labeling of the edges and vertices in the domain trees indicates to which edges and vertices in the target trees they are sent. The depicted morphisms are all examples of a special class
of morphisms in $\Omega$ called \textit{face maps}. There are two types of face
maps in $\Omega$. The first type are the
outer face maps, which are obtained by chopping off an
outer vertex of a tree $T$. The first and the third
morphism in the picture are outer face maps. The second type of face maps are the inner face maps, which are obtained by contracting
an inner edge of $T$. An example is the morphism in the middle of the picture which is
obtained by contracting the edge $c$. Every tree has a set of \emph{outer face
maps} which are labeled by outer vertices $v$ and a set of \emph{inner face maps} which
are labeled by inner edges $e$. \\

\noindent
The category $\dSet$ of dendroidal sets is  defined as the presheaf category on
$\Omega$, i.e.
\begin{equation*}
\dSet := [\Omega^{op},\Set].
\end{equation*}
For a dendroidal set $D$ we denote the value on the tree $T$ by $D_T$ and call it the set
of $T$-dendrices. The dendroidal set represented by a tree $T$ is denoted by
$\Omega[T]$. In particular for the tree with one edge and no vertex we set $\eta:= \Omega[~|~]$. The Yoneda lemma shows that we have $D_T
\cong \Hom(\Omega[T], D)$.

There is a fully faithful embedding of the simplex category $\Delta$ into
$\Omega$ by considering finite linear ordered sets as linear trees. More
precisely this inclusion maps the object $\{0,1,...,n\}=[n] \in \Delta$ to the
tree 
\begin{equation*}
L_n= 
\begin{minipage}{2.5cm}

\xymatrix@R=10pt@C=12pt{
 \\
*=0{\bullet}  \ar@{-}^{a_0}[u] \\
*=0{\bullet}  \ar@{-}^{a_1}[u] \\
*=0{...}  \ar@{-}[u] \\
*=0{\bullet}  \ar@{-}[u] \\
 \ar@{-}^{a_n}[u]
}

\end{minipage} 
\end{equation*}
This inclusion $\Delta \subset \Omega$ induces an adjunction
\begin{equation*}
i_!: \xymatrix{\sSet \ar@<0.3ex>[r] & \dSet: i^* \ar@<0.7ex>[l]}
\end{equation*}
with fully faithful left adjoint $i_!$ (there is also a further right adjoint
$i_*$ which does not play a role in this paper). The functor $i^*$ is given by
restriction to linear trees and the functor $i_!$ is extension by zero, i.e. the
dendroidal set $i_!X$ agrees with $X$ on linear trees and is empty otherwise.

The inclusion of $\Omega$ into the category of coloured, symmetric operads
induces a fully faithful functor $N_d: \Oper \to \dSet$ called the
\emph{dendroidal nerve}. Concretely the dendroidal nerve of an operad $P$ is
given by $N_d(P)_T := \Hom(\Omega(T),P)$.
By definition of $\Omega$ and the Yoneda lemma we have $N_d(\Omega(T)) =
\Omega[T]$. A particularly important case of the dendroidal nerve is when 
the operad $P$ comes from a
(small) symmetric monoidal category $C$. Recall that the operad
associated to $C$ has as colours the objects of $C$ and as operations
$\Hom(c_1,...,c_n; c) = \Hom_C(c_1 \otimes ... \otimes c_n; c)$. By abuse of
notation we call the nerve of this operad $N_d(C)$ as well. This
assignment defines a fully faithful inclusion 
\begin{equation*}
\Symm \to \dSet
\end{equation*}
where $\Symm$ denotes the category of (small) symmetric monoidal categories and
lax monoidal functors. Here lax monoidal for a functor $F$ means that the
structure morphisms $F(a) \otimes F(b) \to F(a \otimes b)$ and $\mathbb{1} \to F(\mathbb{1})$ are not
necessarily invertible.
\\

\noindent
As described above, every tree $T$ has a set of subobjects called faces, which
are obtained by either contracting an inner edge (these are the inner faces) or by truncating outer edges (these are the outer faces). These faces are used to define
dendroidal $\textit{horns}$ and $\textit{boundaries}$ in a way that generalizes horns
and boundaries in simplicial sets. The boundary 
\begin{equation*}
\partial \Omega[T] \subset \Omega[T]
\end{equation*}
is a subobject of $\Omega[T]$ which is the union of all inner and outer faces of
$T$. The horns of $T$ carry a label $a$ which can be either an inner edge or an
outer vertex. Then the horn 
\begin{equation*}
\Lambda^a[T] \subset \Omega[T]
\end{equation*}
is defined as the union of all faces of $T$ except the face labelled by $a$. We
call the horn inner if $a$ is an inner edge and outer if $a$ is an outer vertex.
The reader can easily convince himself that for the case $T= L_n$ these horns
reduce to the horns of the simplex $\Delta[n]$. 

There is one case which deserves special attention, namely the case of trees with 
exactly one vertex. These trees are called \emph{corollas}. More concretely the
$n$-corolla is given by
\begin{equation*}
C_n = 
\begin{minipage}{2.5cm}

\xymatrix@R=10pt@C=12pt{
a_1 & a_2 & ... &  & a_n \\
& & *=0{\bullet}  \ar@{-}[ull] \ar@{-}[ul] \ar@{-}[u] \ar@{-}[ur]   \ar@{-}[urr]
&& \\
&& {b} \ar@{-}[u] & &
}

\end{minipage}
\end{equation*}
We observe that $C_n$ has $n+1$ faces given by the inclusion of the colours
$a_1,...,a_n,b$. All faces are outer. Consequently $C_n$ also has $n+1$ outer
horns which are inclusions of $n$-disjoint unions of $\eta$. 

\begin{definition}\label{def_inner}
A dendroidal set $D$ is called \emph{inner Kan} or an \emph{$\infty$-operad} if $D$
admits fillers for all inner horns, i.e. for each morphism $\Lambda^{e}[T] \to
D$ with $e$ an inner edge there is a morphism $\Omega[T] \to D$ that renders
the diagram
\begin{equation*}
\xymatrix{
\Lambda^{e}[T] \ar[d]\ar[r] & D \\
\Omega[T] \ar[ru] &
}
\end{equation*}
commutative. A dendroidal set is called \emph{fully Kan} if it admits fillers for all
horns.
\end{definition}

The two classes of inner Kan and fully Kan dendroidal sets are very important in
the theory of dendroidal sets and for the rest of the paper. Thus we make some
easy remarks such that the reader gets a feeling for these classes:
\begin{remark}
\begin{itemize}
\item Let $X$ be a simplicial set. Then $i_!X$ is an $\infty$-operad if and only
if $X$ is an $\infty$-category (in the sense of Boardman-Vogt, Joyal, and Lurie).
Moreover for every $\infty$-operad $D$ the underlying simplicial set $i^*D$ is
an $\infty$-category.
\item For a fully Kan dendroidal set $D$ the underlying simplicial set $i^*D$ is
a Kan complex, but for a non-empty simplicial set $X$ the dendroidal set $i_!X$
is never fully Kan since there are no fillers for corolla horns.
\item For every (coloured, symmetric) operad $P$ the dendroidal nerve $N_dP$ is
an $\infty$-operad. This shows that ordinary operads are a special case of
$\infty$-operads. In particular all representable dendroidal sets $\Omega[T]$ are $\infty$-operads.
\item The dendroidal nerve $N_dP$ is fully Kan if and only if $P$ comes from a
grouplike symmetric monoidal groupoid $C$. These are also called
Picard-groupoids. In \cite{BaNi} it has been shown that fully Kan dendroidal
sets model Picard-$\infty$-groupoids. Therefore the last statement shows that
Picard-groupoids are a special case of Picard-$\infty$-groupoids.
\end{itemize}
\end{remark}

The foundational result in the theory of $\infty$-operads is that there is a
model structure on the category of dendroidal sets with fibrant objects given
by $\infty$-operads. This
model structure is a generalization of the the Joyal model structure on
simplicial sets. In order to state the result properly we have to introduce the
class of \emph{normal monomorphisms} of dendroidal sets. This class is defined as
the smallest class of morphisms in $\dSet$ that contains the boundary inclusions
of trees and that is closed under pushouts, retracts and transfinite
compositions. One can also give an easy concrete description (see
\cite[2.3]{MC11}) but we will not need this description here.
\begin{thm}[Cisinski-Moerdijk]
There is a model structure on the category of dendroidal sets with cofibrations
given by normal monomorphisms and fibrant objects given by $\infty$-operads. 
\end{thm}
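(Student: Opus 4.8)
The proof follows the strategy of Cisinski's general theory of model structures on presheaf categories. Since $\dSet = [\Omega^{op},\Set]$ is a Grothendieck topos it is in particular locally presentable, so it suffices to produce three pieces of data: a cofibrantly generated class of cofibrations, an exact cylinder functor compatible with it, and a set of \emph{anodyne} generating trivial cofibrations; Cisinski's recognition theorem then produces the model structure, and its fibrant objects are automatically the objects having the right lifting property against the anodyne maps. The work is to arrange these data so that the cofibrations come out to be the normal monomorphisms and the fibrant objects the $\infty$-operads.

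For the cofibrations I would take the normal monomorphisms, which by definition are the weak saturation of the set $I=\{\partial\Omega[T]\hookrightarrow\Omega[T]\}_{T\in\Omega}$; since $\dSet$ is locally presentable this class is automatically cofibrantly generated with cellular model $I$. That it admits the concrete combinatorial description of \cite{MC11} --- monomorphisms $A\hookrightarrow B$ such that the automorphism group of each tree $T$ acts freely on the nondegenerate $T$-dendrices of $B$ not lying in $A$ --- is checked through the skeletal (dimension-by-dimension) filtration of a dendroidal set, where one attaches nondegenerate dendrices along boundary inclusions; the freeness of these automorphism actions is exactly what separates normal monomorphisms from arbitrary ones. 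In particular every dendroidal set receives a trivial fibration from a normal one, so the resulting model structure will have enough cofibrant objects.

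For the cylinder I would use the Boardman--Vogt tensor product with the interval $J:=i_!(J_0)$, where $J_0$ is the nerve of the contractible groupoid on two objects: the functor $D\mapsto D\otimes J$, together with the endpoint inclusions $D\sqcup D\to D\otimes J$ (using $D\otimes\eta\cong D$) and the projection $D\otimes J\to D$, is a cylinder. The first genuinely technical ingredient is the lemma that $\otimes$ is compatible with normal monomorphisms: if $A\hookrightarrow B$ is normal and $X$ is normal, then $A\otimes X\to B\otimes X$ is normal, and the pushout--products of normal monomorphisms with the endpoint inclusions of $J$ are again normal. This is proved using the structure theory of the tensor product of two representables $\Omega[T]\otimes\Omega[S]$ as a finite union of representables indexed by the \emph{shuffles} of $T$ and $S$, by induction over the poset of shuffles; it makes $-\otimes J$ an exact cylinder in Cisinski's sense. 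One then takes the dendroidal anodyne extensions to be the weak saturation of the inner horn inclusions $\Lambda^e[T]\hookrightarrow\Omega[T]$ ($e$ an inner edge) together with the maps $(\{\varepsilon\}\otimes\Omega[T])\cup_{\{\varepsilon\}\otimes\partial\Omega[T]}(J\otimes\partial\Omega[T])\hookrightarrow J\otimes\Omega[T]$ for $\varepsilon\in\{0,1\}$ and $T\in\Omega$; Cisinski's theorem then delivers a cofibrantly generated model structure on $\dSet$ with cofibrations the normal monomorphisms.

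Finally one identifies the fibrant objects, and this is the heart of the argument and its main obstacle. One inclusion is immediate from the first family of generators: a fibrant $D$ lifts against all inner horns, hence is inner Kan, i.e.\ an $\infty$-operad. For the converse one must show that an inner Kan $D$ automatically lifts against the $J$-type generators above. This is the dendroidal analogue of Joyal's theorem that the subspace of isomorphisms in an $\infty$-category is a Kan complex: the idea is that over the invertible interval $J$ every dendrex of $D$ can be straightened, and concretely one fills the relevant \emph{outer} horns of the shapes $J\otimes\Omega[T]$ using only inner horn fillers of $D$, by an induction over trees modeled on the special-outer-horn argument for simplicial sets. Combined with the compatibility of $\otimes$ with normal monomorphisms, this shows that for inner Kan $D$ the restriction map $\dHom(J\otimes\Omega[T],D)\to\dHom\big((\{\varepsilon\}\otimes\Omega[T])\cup_{\{\varepsilon\}\otimes\partial\Omega[T]}(J\otimes\partial\Omega[T]),D\big)$ is a trivial fibration of simplicial sets, so $D$ is fibrant. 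Hence the fibrant objects are exactly the $\infty$-operads; as a byproduct, $i^*$ sends this model structure to the Joyal model structure on $\sSet$ and $N_d$ sends operads to fibrant objects, in accordance with the remarks above.
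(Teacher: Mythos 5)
The paper does not prove this statement: it is imported verbatim from Cisinski--Moerdijk (the cited \cite{MC11}), and the surrounding text only adds the remark that a model structure is determined by its cofibrations and fibrant objects, so the theorem can be read as a pure existence statement. There is therefore no internal proof to compare against, and your proposal has to be judged as a reconstruction of the original argument. Its architecture is faithful to that argument: Cisinski's machinery on the presheaf topos $[\Omega^{op},\Set]$, normal monomorphisms as the weak saturation of the boundary inclusions (with the free-automorphism-action characterization obtained from the skeletal filtration), the cylinder $-\otimes J$ with $J=i_!(J_0)$, and an anodyne class generated by inner horns together with the $J$-pushout-products.

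As written, however, it is a road map rather than a proof, because the two results that carry essentially all of the weight are each dispatched in a sentence. First, the claim that $-\otimes J$ is an exact cylinder compatible with normal monomorphisms rests on the shuffle decomposition of $\Omega[T]\otimes\Omega[S]$ and a nontrivial induction over the poset of shuffles; this occupies a substantial part of the source, and it is exactly the point where the Boardman--Vogt tensor product is known to be delicate (several of the stronger pushout--product assertions in the original paper later required correction; the special cases you actually need, with one factor of the form $i_!X$, do hold, but that is something to be verified, not assumed). Second, and more seriously, the converse inclusion ``inner Kan $\Rightarrow$ fibrant'' is the dendroidal analogue of Joyal's theorem on lifting equivalences against special outer horns. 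Your appeal to ``filling the relevant outer horns \ldots\ by an induction over trees modeled on the special-outer-horn argument'' names the theorem rather than proving it: in the original one must first develop the notion of equivalence in an $\infty$-operad, prove that $\dHom(\Omega[T],D)\to\dHom(\partial\Omega[T],D)$ is an inner fibration of $\infty$-categories for $D$ inner Kan, and only then reduce the dendroidal lifting problem to Joyal's simplicial statement. Until these two ingredients are supplied, the proposal correctly identifies the strategy but leaves the heart of the theorem unproved.
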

Note that a model structure is uniquely determined by its class of cofibrations
and the fibrant objects. Thus the above result can be read as an existence
statement. The weak equivalences in the Cisinski-Moerdijk model structure are
called \emph{operadic equivalences}. Their definition is not quite explicit, though one can give explicit criteria for a morphisms between
$\infty$-operads to be an operadic equivalence, see \cite[Theorem 3.5 \& Theorem
3.11]{MC10}.
A similar model structure exists for fully Kan dendroidal sets \cite{BaNi}:
\begin{thm}
There is a model structure on the category of dendroidal sets with cofibrations
given by normal monomorphisms and fibrant objects given by fully Kan dendroidal
sets. 
\end{thm}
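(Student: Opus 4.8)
The plan is to produce this model structure the same way Cisinski and Moerdijk produced the operadic (inner Kan) one, namely via Cisinski's general machinery for model structures on presheaf categories, but with a larger class of anodyne extensions that forces fillers for \emph{all} horns rather than only the inner ones. Here $\dSet=[\Omega^{op},\Set]$ is a presheaf topos; the normal monomorphisms are the weakly saturated class generated by the boundary inclusions $\partial\Omega[T]\hookrightarrow\Omega[T]$, so they form an admissible class of cofibrations, and for the cylinder I would take $J\otimes(-)$, the Boardman--Vogt tensor with the dendroidal interval $J=i_!\,N(\{0\xrightarrow{\ \sim\ }1\})$, together with the endpoint inclusions $\{0\},\{1\}\hookrightarrow J$ and the collapse $J\to\eta$. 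This is the same exact cylinder already used for the operadic model structure, so its exactness is available off the shelf.

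The one new ingredient is the class of anodyne maps, which I would take to be the weakly saturated class $\mathcal A$ generated by \emph{all} inner and outer horn inclusions $\Lambda^{a}[T]\hookrightarrow\Omega[T]$ (that is, $a$ ranging over inner edges and outer vertices of $T$). With this choice the fibrant objects are identified for free: the right lifting property against a set of maps coincides with the right lifting property against its weakly saturated closure, so a dendroidal set lifts against $\mathcal A$ iff it admits fillers for every inner and outer horn, i.e.\ iff it is fully Kan. Thus the entire content of the theorem is that $\bigl(\text{normal monos},\,J\otimes(-),\,\mathcal A\bigr)$ satisfies Cisinski's hypotheses; granting that, his theorem delivers a cofibrantly generated model structure with the stated cofibrations and fibrant objects, the weak equivalences being the maps inducing homotopy equivalences (with respect to $J$) on fibrant replacements.

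Checking Cisinski's hypotheses reduces, beyond the formal points above, to verifying that $\mathcal A$ is a class of anodyne extensions for $J$, which concretely amounts to two pushout-product compatibilities: the pushout-product of any normal monomorphism with an endpoint inclusion $\{\epsilon\}\hookrightarrow J$ lies in $\mathcal A$, and $u\,\hat\otimes\,v\in\mathcal A$ whenever $u\in\mathcal A$ and $v$ is a normal monomorphism. By the usual reduction of pushout-products against an arbitrary cofibration to the generating cofibrations, both come down to combinatorial assertions about $\Omega$: one must realise certain explicit normal monomorphisms into the Boardman--Vogt tensor products $\Omega[T]\otimes J$ and $\Omega[S]\otimes\Omega[T]$ as transfinite composites of pushouts of inner and outer horn inclusions. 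For the inner horns these are exactly the dendroidal anodyne-extension lemmas of Cisinski and Moerdijk, so the work is the outer-horn analogues, and this is where I expect the main obstacle. One needs an explicit filtration of the relevant tensor product by its non-degenerate dendrices --- the dendroidal counterpart of the shuffle filtration of $\Delta[m]\times\Delta[n]$ --- ordered so that each stage is attached along an outer (or inner) horn; outer faces, which chop off a leaf vertex, interact with the Boardman--Vogt tensor product far less transparently than inner faces (which contract an inner edge), so keeping track of the dendrices and choosing the order of attachment is the delicate part.

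It is worth recording an alternative that avoids building $\mathcal A$ by hand. Since every fully Kan dendroidal set is in particular inner Kan, hence an $\infty$-operad, one expects the sought model structure to be the left Bousfield localization of the Cisinski--Moerdijk model structure at the set of outer horn inclusions. That structure is combinatorial and left proper, so Hirschhorn's localization theorem produces a model structure with the same cofibrations whose fibrant objects are the outer-horn-local $\infty$-operads; it then remains to identify these local objects with the fully Kan dendroidal sets. That identification again hinges on understanding the behaviour of outer horns in the operadic homotopy theory, so the essential difficulty --- outer faces versus the tensor product --- is the same whichever route one takes.
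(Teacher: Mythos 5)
The paper itself offers no proof of this theorem: it is imported wholesale from \cite{BaNi}, so the only meaningful comparison is with the argument given there. Your strategy is the same one: run Cisinski's existence theorem for the presheaf topos $\dSet$ with the exact cylinder $J\otimes(-)$ and with the anodyne class generated by \emph{all} horn inclusions, so that the fibrant objects are tautologically the fully Kan dendroidal sets; and you correctly locate the entire mathematical content in the compatibility of outer horns with the tensor product. The Bousfield-localization alternative you mention is also essentially how one identifies the resulting homotopy theory afterwards, and you are right that it does not let you escape the outer-horn combinatorics (locality is a priori a condition on derived mapping spaces, and upgrading it to strict horn fillers is itself work).

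That said, as a proof this has a genuine gap, and one misstated hypothesis. The gap: everything you have written down is the formal shell, and the theorem \emph{is} the combinatorial core you defer --- exhibiting $J\otimes\Lambda^{a}[T]\cup\partial J\otimes\Omega[T]\to J\otimes\Omega[T]$ (and the analogous maps for the endpoint inclusions against boundaries) as transfinite composites of pushouts of horn inclusions when $a$ is an \emph{outer} vertex, including the degenerate conventions for corollas: the root horn of $C_0$ is $\emptyset\to\Omega[C_0]$ and the corolla horns are inclusions of disjoint unions of $\eta$'s, and these are precisely what force units and inverses, so they must not be dropped from $\mathcal{A}$. Without those filtration arguments nothing has been proved. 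The misstated hypothesis: you require the pushout-product $u\,\hat{\otimes}\,v$ to lie in $\mathcal{A}$ for every $u\in\mathcal{A}$ and every normal monomorphism $v$. Cisinski's axioms only ask for compatibility with the cylinder, i.e.\ for $v$ among $\{\epsilon\}\to J$ and $\partial J\to J$; the blanket statement for arbitrary $v$ would entangle you with tensor products $\Omega[S]\otimes\Omega[T]$ of general trees, for which pushout-product statements of this kind are known to be delicate and, in the operadic setting, false as originally stated (Heuts--Hinich--Moerdijk). Staying with the cylinder keeps all the combinatorics inside tensor products with simplicial sets, which is what makes the argument of \cite{BaNi} go through; you should restrict your second condition accordingly.
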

We call this second model structure the \emph{stable} model structure and the
weak equivalences \emph{stable equivalences}. The stable model structure is a
left Bousfield localization of the Cisinski-Moerdijk model structure, i.e. every
operadic equivalence is also a stable equivalence. Again stable equivalences are
defined very indirectly, but we will give a more direct criterion in Proposition
\ref{prop:hk}.

The existence of the stable  model structure also implies that for every
dendroidal set $D$ we can choose a functorial fibrant replacement $D \to D_K$
with $D_K$ being fully Kan. This can for example be done using Quillen's small
objects argument by iteratively gluing in fillers for horns. This fibrant
replacement will play an important role in our definition of the K-theory groups
of $D$.  \\

\noindent Finally we want to remark that the adjunction $i_!: \xymatrix{\sSet
\ar@<0.3ex>[r] & \dSet: i^* \ar@<0.7ex>[l]}$ becomes a Quillen adjunction in the
following two cases:
\begin{itemize}
\item For the Kan-Quillen model structure on $\sSet$ and the stable model
structure on $\dSet$.
\item For the Joyal model structure on $\sSet$ and the Cisinski-Moerdijk model
structure on $\dSet$.
\end{itemize}

\section{\texorpdfstring{K$_0$}{K0} of dendroidal sets}\label{sec_k0}

In this section we want to define an abelian group $K_0(D)$ for each dendroidal
sets $D$ which generalizes the group $K_0(C)$ for a symmetric monoidal category
$C$.
The latter group $K_0(C)$ is defined as the group completion of the abelian
monoid
$\pi_0(C)$. 

Note that there are two possible meanings for $\pi_0(C)$ of a symmetric
monoidal category $C$. It can either be the set of isomorphism classes in $C$ or
the connected components in $NC$. In the case that $C$ is a groupoid the two
choices agree. Here we want $\pi_0(C)$ to mean the connected components. In
particular to recover the classical $K$-theory of a ring $R$ we have to compute
$K_0(\text{Proj}_R)$ where $\text{Proj}_R$ denotes the maximal subgroupoid
inside the category of finitely generated projective $R$-modules.

\begin{definition}\label{def:knull}
Let $D$ be a dendroidal set. We define $K_0(D)$ as the abelian group freely
generated
by the elements $x \in D_{L_0}$ (i.e. morphisms $\eta \to D$)
subject to the relations
\begin{equation*}
 x_1 + ... + x_n = x 
\end{equation*}
whenever there is a corolla $\Omega[C_n] \to D$ with ingoing faces $x_1,...,x_n$
and outgoing face $x$. For a $0$-corolla $\Omega[C_0] \to D$ the left hand sum
is understood to be 0. A map $f: D \to D'$ of dendroidal sets induces a morphism
$f_*: K_0(D) \to K_0(D')$ of abelian groups by applying $f_{L_0}$.
\end{definition}
Note that the relations we impose in the definition of $K_0(D)$ also include the
possibility of the 1-corolla $C_1 = L_1$. Hence if two elements $x,y \in
D_{L_0}$ are equal in $\pi_0(i^*(D))$, i.e. there is a chain of edges in the
underlying simplicial set connecting them, then they are also equal in
$K_0(D)$. Therefore we have a well defined map (of sets)
\begin{equation*}
\lambda_D: \quad \pi_0(i^*D)) \longrightarrow K_0(D). 
\end{equation*}
\begin{proposition}\phantomsection\label{prop:k}
\begin{enumerate}
\item 
If $D = i_!X$ for a simplicial set $X$, then $K_0(D)$ is freely generated by
$\pi_0(i^*D) = \pi_0(X)$.
\item
If $D = N_dC$ for a symmetric monoidal category $C$, then $\pi_0(i^*D) =
\pi_0(NC)$ is an abelian monoid and $\lambda_D$ exhibits $K_0(D)$ as the group
completion of $\pi_0(NC)$. In particular we have 
$K_0(D) = K_0(C)$.
\item \label{item:eins}
If $D$ is fully Kan, then $\lambda_D$ is a bijection.
\end{enumerate}
\end{proposition}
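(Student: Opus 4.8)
\emph{Parts (1) and (2)} are direct computations from the definitions, so I would dispatch them first. For (1): since $i_!X$ vanishes on non-linear trees and equals $X$ on linear ones, $(i_!X)_{L_0}=X_0$ and the only corolla carrying non-degenerate dendrices is $C_1=L_1$, whose dendrices are the $1$-simplices of $X$; hence the defining relations of $K_0(i_!X)$ merely identify the two endpoints of every edge, so $K_0(i_!X)$ is free on $X_0/\!\sim\;=\pi_0(X)=\pi_0(i^*i_!X)$ with $\lambda_D$ the inclusion of the basis. For (2): $(N_dC)_{L_0}=\mathrm{ob}(C)$, and a $C_n$-dendrex of $N_dC$ is a morphism $c_1\otimes\cdots\otimes c_n\to c$ with ingoing faces $c_1,\dots,c_n$ and outgoing face $c$. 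I would impose the relations in stages: the $C_1$-relations identify objects joined by a morphism, leaving the free abelian group on $\pi_0(NC)$ (and $\pi_0(NC)=\pi_0(i^*N_dC)$ because $i^*N_dC$ is the ordinary nerve of $C$); the $C_2$-relations then become $[c_1]+[c_2]=[c_1\otimes c_2]$, which together with $[\mathbb{1}]=0$ (forced by $\mathbb{1}\otimes\mathbb{1}\cong\mathbb{1}$) is precisely the presentation of the group completion of the commutative monoid $(\pi_0(NC),\otimes)$, while the $C_0$-relations and the $C_n$-relations for $n\ge 3$ are then redundant. Thus $\lambda_D$ exhibits $K_0(N_dC)$ as this group completion, i.e.\ $K_0(N_dC)=K_0(C)$.

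\emph{Part (3)} is the substantive case, and the plan is to equip $A:=\pi_0(i^*D)$ with an abelian group structure for which $\lambda_D$ becomes an isomorphism. I would define $[x]+[y]$ as the component of the outgoing face of a filler of the outer horn $\Lambda^b[C_2]=\eta\sqcup\eta\to D$ picking out $x$ and $y$; take the neutral element to be the component of the outgoing face of any $0$-corolla $\Omega[C_0]\to D$; and obtain an inverse of $[x]$ from a filler of the outer horn $\Lambda^{a_1}[C_2]$ whose other ingoing face is $x$ and whose outgoing face is a $0$-corolla output. The crucial input for well-definedness is that, $D$ being fully Kan, the horn inclusions $\Lambda^a[T]\hookrightarrow\Omega[T]$ are acyclic cofibrations for the stable model structure of \cite{BaNi}, so the mapping-space functor $\sHom(-,D)$ sends them to trivial fibrations; hence the simplicial set of fillers of any horn $\Lambda^a[T]\to D$ is contractible, and restricting along the colour inclusions $\eta\hookrightarrow\Omega[T]$ shows that the outgoing face of a $C_n$-corolla in $D$ depends, up to a path in $i^*D$, only on the components of its ingoing faces. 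Well-definedness and commutativity (via the $\Sigma_n$-symmetry of $C_n$) are then immediate, while associativity and unitality follow by grafting two corollas onto the trees with two vertices and one inner edge, filling the resulting inner horns, and inspecting the inner faces (a $C_3$, resp.\ an edge $L_1$).

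With $A$ an abelian group, the $C_2$- and $C_0$-relations say precisely that $\lambda_D\colon A\to K_0(D)$ is a homomorphism, and it is surjective because its image is a subgroup of $K_0(D)$ containing every generator. For injectivity I would define $\nu\colon K_0(D)\to A$ on generators by $\nu(x)=[x]$; the comparison statements above show that every defining relation of $K_0(D)$—including the $C_0$-relations and the $C_n$-relations for $n\ge 3$—holds in $A$, so $\nu$ is a well-defined homomorphism with $\nu\circ\lambda_D=\mathrm{id}_A$, whence $\lambda_D$ is injective and therefore bijective. I expect the only genuine difficulty to lie in part (3): importing the principle that a horn in a fully Kan dendroidal set has an essentially unique filler, and extracting from it the precise combinatorial comparisons (via grafted trees and inner-horn fillers) needed for the group axioms and for $\nu$; the remaining steps are just unwindings of the definitions.
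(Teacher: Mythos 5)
Your overall strategy coincides with the paper's: parts (1) and (2) by unwinding the definitions and recognizing the standard presentation of the group completion, and part (3) by putting an abelian group structure on $\pi_0(i^*D)$ via fillers of corolla horns and then checking that $\lambda_D$ is an isomorphism by exhibiting the relations of $K_0(D)$ inside $\pi_0(i^*D)$. The one place where you genuinely diverge is the justification of well-definedness. The paper never invokes mapping spaces: it compares two fillers, or two representatives of a class, by writing down an explicit two-vertex tree (a $1$-corolla grafted onto a leaf of $C_2$, resp.\ $C_2$ grafted onto a $1$-corolla at the root), mapping the appropriate horn of that tree to $D$, and filling. You instead appeal to the horn inclusions being stable acyclic cofibrations and to $\sHom(-,D)$ turning them into trivial fibrations, so that the space of fillers is contractible. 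Two caveats here. First, this uses that the stable model structure is compatibly simplicially enriched (a pushout-product property for the dendroidal tensor product), which is not established in this paper and is a genuinely delicate point for dendroidal sets; the explicit horn-filling argument avoids it entirely. Second, contractibility of the filler space of a \emph{fixed} horn $\bigsqcup\eta\to D$ only compares corollas whose ingoing faces are literally equal; to get your stronger claim that the output depends only on the \emph{components} of the inputs you still need the grafted two-vertex trees (which you do mention later for associativity), i.e.\ exactly the paper's combinatorial step. So your argument is correct in outline, but I would either replace the abstract filler-space step by the explicit horn fillings, or supply a reference for the enrichment and then still add the two-vertex-tree comparison; likewise, for $\nu$ to respect the $C_n$-relations for $n\ge 3$ you need the paper's inductive statement that an $n$-corolla's output represents the iterated sum of its inputs, which deserves to be spelled out (grafting an $n$-corolla onto a $2$-corolla and filling an inner horn) rather than absorbed into ``the comparison statements above.''
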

\begin{proof}
1) For a simplicial set $X$ there are no $n$-corollas $C_n \to i_!(X)$ except
for $n = 1$. Hence $K_0(i_!(X))$ is the free group generated by the vertices of
$X$ where two vertices are identified if there is an edge connecting them. The
usual description of $\pi_0(X)$ then proves the statement. \\

2) The existence of the monoid structure on $\pi_0(NC)$ and the fact that
$\lambda_D$ is a group homomorphism are immediate from the definition and the
fact that an $n$-corolla in $N_d(C)$ is exactly a morphisms $c_1 \otimes ...
\otimes c_n \to c$ in $C$. The claim then follows from the fact that the group
completion of an abelian monoid $A$ can be described as the free group generated
by objects $a \in A$ subject to the relations $a_1 + ... + a_n = a$ whenever
this holds in $A$. \\

3) We want to explicitly construct an inverse to $\lambda_D$. But as a first
step we endow $\pi_0(i^*D)$ with the structure of an abelian group. Therefore
for two elements $a, b \in (i^*D)_0 = D_{L_0}$ we choose a 2-corolla 
$\Omega[C_2] \to D$ with inputs $a$ and $b$ and output $c$. Such a corolla
exists since $D$ is fully Kan. Then we set $[a] + [b] := [c]$. 

We have to show that this addition is well defined. So first assume that there
is another $a' \in D_{L_0}$ with $[a'] = [a]$ in $\pi_0(i^*D)$. This means that
there is an 
edge $L_1 \to D$ connecting $a'$ and $a$. We look at the tree 
$$  \xymatrix@R=10pt@C=12pt{
& *=0{} &&&&&& \\
&  *=0{\bullet} \ar@{-}[u]^{a'} &&&&&& \\
T= &&*=0{\bullet} \ar@{-}[ul]^{a} \ar@{-}[ur]_{b} &&&&&\\
&&*=0{}\ar@{-}[u]^{c} &&&&& \\
}
$$
The two maps $\Omega[C_2] \to D$ and $L_1 \to D$ chosen above determine a map
from the inner horn $\Lambda^a[T] \to D$. Since $D$ is fully Kan we can fill
this horn and obtain a map $\Omega[T] \to D$. In particular there is a 2-corolla
$\Omega[C_2] \to D$ with inputs $a', b$ and output $c$. Thus we have $[a'] + [b]
= [c] = [a] + [b]$. This shows that the addition does not depend on the
representatives of $[a]$ and by symmetry also for $[b]$. 

It remains to check that the addition does not depend on the choice of 2-corolla
$\Omega[C_2] \to D$ with inputs $a$ and $b$. Therefore assume we have another
2-corolla in $D$ with inputs $a,b$ and output $c'$. Then consider the tree
$$  \xymatrix@R=10pt@C=12pt{
&&&&&&& \\
&&*=0{\bullet} \ar@{-}[ul]^{a} \ar@{-}[ur]_{b} &&&&&\\
T= &&*=0{\bullet}\ar@{-}[u]^{c} &&&&& \\
&&*=0{}\ar@{-}[u]^{c'} &&&&&
}
$$
As before we fill this tree at the outer horn of the binary vertex.
This yields an edge between $c$ and $c'$ and thus shows that the addition is well-defined. 

Before we show that $\pi_0(i^*D)$ together with the addition is really an
abelian group we need another preparatory fact: in $\pi_0(i^*D)$ the equality
$\big((...((a_1 + a_2) + a_3) + ... )+ a_n\big)  = b$ holds precisely if there
is an $n$-corolla $\Omega[C_n] \to D$ with inputs $a_1,...,a_n$ and output $b$. 
We show this by induction. For $n=2$ this is the definition. Assume it holds for
$n$. Then the claim for $n+1$ follows straightforward by looking at horns of the
tree obtained by grafting an $n$-corolla onto a 2-corolla.

Now we need to show that $\pi_0(i^*D)$ together with the addition operation just defined  is in fact an
abelian group. The fact that the multiplication is abelian is automatic by the
fact that we consider symmetric operads (resp. non-planar trees). So we need to
show that it is associative, there are inverses and units. This follows be
filling the root horns $\eta_a \sqcup \eta_b \to \Omega[C_2]$ and $\emptyset \to
\Omega[C_0]$ and we leave the details to the reader. 

Altogether we have shown that $\pi_0(i^*D)$ admits the structure of an abelian
group. By definition of the two group structures the morphism $\lambda_D:
\pi_0(i^*D) \to K_0(D)$ is a group homomorphism. An inverse is now induced by
the map $D_{L_0} \to \pi_0(i^*D)$ and the fact that $K_0(D)$ is freely generated
by $D_{L_0}$ subject to relations which, as shown above, hold in $\pi_0(D)$.
\end{proof}

\begin{example}\phantomsection\label{exampleknull}
\begin{itemize}
\item
For a tree $T$ let $\ell_T$ denote the set of leaves of $T$. Then we have 
$$K_0\big(\Omega[T]\big) \cong \mathbb{Z}\langle \ell_T \rangle\ ,$$
i.e. $K_0$ is the free abelian group generated by the set of leaves of $T$. 
\item
For a map $f: \Omega[S] \to \Omega[T]$ in $\Omega$ we have for each edge $e \in
\ell_S$ the subset $\ell_{f(e)} \subset \ell_T$ of leaves over the edge $f(e)$.
The induced map
$$ K_0(S) \cong \mathbb{Z}\langle \ell_S \rangle \to  K_0(T) \cong
\mathbb{Z}\langle \ell_T \rangle$$
is then the map that sends the generator $e \in \mathbb{Z}\langle \ell_S
\rangle$ to the element
$e_1 + ... + e_k \in \mathbb{Z}\langle \ell_T \rangle$ where $\{e_1,...,e_k\} =
\ell_{f(e)}$ is the set of leaves over $f(e)$.
\end{itemize}
\end{example}

\begin{lemma}\phantomsection\label{adjoint}
\begin{enumerate}
\item
The functor $K_0: \dSet \to \Ab$ is a left adjoint and thus preserves all
colimits. 
\item 
For a horn inclusion $\Lambda^a[T] \to \Omega[T]$ the induced morphism 
$$ K_0\big(\Lambda^a[T]\big) \to K_0\big(\Omega[T]\big)$$
is an isomorphism.
\end{enumerate}
\end{lemma}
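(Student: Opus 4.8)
For (1) the plan is to recognise $K_0$ as a cokernel of colimit-preserving functors. By Definition \ref{def:knull}, $K_0(D)$ is the cokernel of the natural transformation
\begin{equation*}
\varphi_D\colon \bigoplus_{n\ge 0}\mathbb{Z}[D_{C_n}]\longrightarrow \mathbb{Z}[D_{L_0}],\qquad c\longmapsto \sum_i [x_i(c)]-[x(c)],
\end{equation*}
where $\mathbb{Z}[-]$ is the free abelian group on a set and $x_1(c),\dots,x_n(c),x(c)\in D_{L_0}$ are the ingoing and outgoing faces of a corolla $c\in D_{C_n}$, read off from $c$ via the face maps $D_{C_n}\to D_{L_0}$. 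The source and target of $\varphi$ are each the composite of an evaluation functor $\dSet\to\Set$ with the left adjoint $\mathbb{Z}[-]\colon\Set\to\Ab$, so both preserve colimits (colimits of presheaves being pointwise); hence $K_0=\mathrm{coker}(\varphi)$ preserves all colimits, and since $\dSet$ and $\Ab$ are locally presentable it is a left adjoint. An explicit right adjoint is $A\mapsto R(A)$ with $R(A)_T:=\Hom_{\Ab}(K_0(\Omega[T]),A)$ and $\Omega$-action induced by the maps of Example \ref{exampleknull}: the adjunction isomorphism sends $\psi\colon D\to R(A)$ to $\psi_{L_0}\colon D_{L_0}\to\Hom_{\Ab}(\mathbb{Z},A)=A$, which respects the relations of Definition \ref{def:knull} precisely because the leaf-inclusions $\eta\to\Omega[C_n]$ induce $1\mapsto a_i$ on $K_0$ while the root-inclusion induces $1\mapsto a_1+\dots+a_n$, again by Example \ref{exampleknull}.

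For (2) I would work directly with the generators-and-relations description. For any subobject $E\subseteq\Omega[T]$, Definition \ref{def:knull} presents $K_0(E)$ as the free abelian group on the edges of $T$ occurring in $E$, modulo the relations coming from those corollas $\Omega[C_n]\to\Omega[T]$ that factor through $E$; and Example \ref{exampleknull} identifies $K_0(\Omega[T])=\mathbb{Z}\langle\ell_T\rangle$ with the same free group modulo the single relation $x=\sum x_i$ per vertex of $T$ (all other corolla relations being consequences of these). First dispose of $T=C_n$ by hand: every horn $\Lambda^a[C_n]$ is a disjoint union of copies of $\eta$ (as noted before Definition \ref{def_inner}), so $K_0(\Lambda^a[C_n])$ is free with no relations, and since a leaf of $C_n$ maps to itself and the root maps to $a_1+\dots+a_n$ in $K_0(\Omega[C_n])$, the images of the generators form a $\mathbb{Z}$-basis, whence an isomorphism (the degenerate cases $C_0,C_1$ included). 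Now suppose $T$ has at least two vertices. Such a $T$ has at least two outer vertices and (since its number of inner edges is one fewer than its number of vertices) at least one inner edge, hence at least three faces; and any fixed edge of $T$ is absent from at most one face of $T$ (an inner edge only from the inner face contracting it, a leaf only from the outer face at the vertex just below it, the root only from the outer face at the root vertex). So every edge of $T$ still occurs in $\Lambda^a[T]$, and therefore $K_0(\Lambda^a[T])\to K_0(\Omega[T])$ has the same generators and is surjective; the lemma thus reduces to showing that each single-vertex relation of $T$ already holds in $K_0(\Lambda^a[T])$, for then the surjection is injective as well.

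This last point is the heart of the matter. The single-vertex relation at a vertex $v$ holds as soon as the corolla $\Omega[C_v]\to\Omega[T]$ factors through $\Lambda^a[T]$, and $\Omega[C_v]$ fails to factor only when every face of $T$ is $\partial_a[T]$, the outer face at $v$, or an inner face contracting an edge incident to $v$. Using once more that $T$ has at least two outer vertices and that its inner edges are one fewer in number than its vertices, a short case analysis shows this can happen only when $T$ has exactly two vertices and $\Lambda^a[T]$ is the outer horn at the vertex other than $v$; there $\Lambda^a[T]$ is the union of its two remaining faces — the single-vertex corolla at the other vertex, and the composite corolla obtained by contracting the unique inner edge — and the difference of these two corolla relations is exactly the single-vertex relation at $v$ (this works verbatim when $v$ is nullary as well). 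Hence all single-vertex relations hold in $K_0(\Lambda^a[T])$, completing (2). The cokernel argument in (1), the corolla computations, and the surjectivity in (2) are routine; the step I expect to demand genuine care is the bookkeeping of which corollas and faces lie inside a given \emph{outer} horn — and in particular the fact that the single-vertex relation at $v$ persists even when $\Omega[C_v]$ does not lie in the horn — a finite but slightly delicate combinatorial check whose only exceptional configuration is a two-vertex tree with an outer horn.
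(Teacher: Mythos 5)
Your proposal is correct and follows essentially the same route as the paper: in (1) your explicit right adjoint $A\mapsto\bigl(T\mapsto\Hom_{\Ab}(\mathbb{Z}\langle\ell_T\rangle,A)\bigr)$ is exactly the paper's inclusion $i\colon\Ab\to\dSet$ (the cokernel/colimit argument is a harmless addition), and in (2) you make the same case division — corollas by hand, trees with more than two vertices trivially (all edges and all relevant corollas already lie in the horn), and the genuine content concentrated in the three horns of a two-vertex tree, resolved by the same subtraction of the two remaining corolla relations. Your bookkeeping of which faces omit a given edge or single-vertex corolla is a slightly more systematic packaging of what the paper verifies by writing out the three presentations of $K_0(\Lambda^a[C_{n,k}])$ explicitly, but the mathematics is the same.
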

\begin{proof}
1) $K_0$ is left adjoint to the inclusion functor $i: \Ab \to \dSet$ which can
be described as follows. Let $A$ be an abelian group. We consider it as a discrete
symmetric monoidal category (i.e. without non trivial morphisms) then we consider
it as an operad (as described in the previous section) and take the dendroidal
nerve. Explicitly we obtain $i(A)_T = A \langle\ell_T\rangle$. Then it is easy
to see that $K_0$ is left adjoint to $i$.

2) If the tree $T$ has more than two vertices one easily verifies that the
inclusion $\Lambda^a[T] \to \Omega[T]$ induces a bijection when evaluated on
$L_0$ and on $C_n$. Thus it clearly induces an isomorphism on $K_0$. Therefore
it remains to check the claim for horns of trees with one or two vertices. For
one vertex the tree is a corolla and the horn is a disjoint union of $\eta$'s.
Then the verification of the statement is straightforward using Example
\ref{exampleknull}. Thus only the case of trees with two vertices remains. 

Such trees can all be obtained by grafting an $n$-corolla $C_n$ for $n \geq 0$ on
top of a $k$-corolla for $k \geq 1$. We call this tree $C_{n,k}$.
$$  \xymatrix@R=10pt@C=12pt{
&&&&&&&\\
&&&&_{a_1}&_{a_2 \quad\cdots}&_{a_n}& \\
C_{n,k}= \qquad && &&_{b_{k-1}}& *=0{~\bullet_v}
\ar@{-}[ul]\ar@{-}[u]\ar@{-}[ur]\ar@{-}&& \\
&&&*=0{~\bullet_w}\ar@{-}[ull]^{b_1}\ar@{-}[ul]_{b_2 ~
\cdots}\ar@{-}[ur]\ar@{-}[urr]_{b_k}&&&&\\
&&&*=0{}\ar@{-}[u]^c &&&&
}
$$
There are three possible horns and applying the definitions yields the following
groups:
\begin{align*}
K_0(\Lambda^{b_k}[C_{n,k}])&= \frac{ \mathbb{Z} \langle a_1,...,a_n,b_1,...,b_k,
c \rangle}{a_1 + ... + a_n = b_k, ~ b_1 + ... +b_k = c} \\
K_0(\Lambda^{v}[C_{n,k}]) &= \frac{ \mathbb{Z} \langle a_1,...,a_n,b_1,...,b_k,
c \rangle}{a_1 + ... + a_n = b_k, ~ b_1 + ... +b_{k-1} + a_1 + ... + a_n = c} \\
K_0(\Lambda^{w}[C_{n,k}]) &= \frac{ \mathbb{Z} \langle a_1,...,a_n,b_1,...,b_k,
c \rangle}{b_1 + ... +b_k = c, ~ b_1 + ... +b_{k-1} + a_1 + ... + a_n = c} 
\end{align*}
Clearly these groups are all isomorphic to $K_0(\Omega[C_{n,k}]) \cong
\mathbb{Z} \langle a_1,...,a_n, b_1,...,b_{k-1} \rangle$.
\end{proof}

\begin{proposition}\label{prop:knull}
For a stable equivalence $f: D \to D'$ the induced morphism $f_*: K_0(D) \to
K_0(D')$ is an isomorphism. In particular this holds also for operadic
equivalences.
\end{proposition}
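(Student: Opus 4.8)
The plan is to show that $K_0$, viewed as a functor $\dSet \to \Ab$, sends stable equivalences to isomorphisms by exploiting the universal property of a left Bousfield localization. The key observation is that $\Ab$, regarded as a full subcategory of $\dSet$ via the functor $i$ of Lemma \ref{adjoint}, consists of fully Kan dendroidal sets: indeed $i(A) = N_d(A)$ is the dendroidal nerve of the abelian group $A$ considered as a (discrete, hence group-like) symmetric monoidal groupoid, which by the last bullet of the Remark is fully Kan. Therefore every object in the image of $i$ is fibrant in the stable model structure, and the adjunction $K_0 \dashv i$ of Lemma \ref{adjoint}(1) is, on the level of the underlying categories, the kind of Quillen-type adjunction whose left adjoint inverts the stable equivalences — provided $K_0$ also sends the generating trivial cofibrations to isomorphisms (equivalently weak equivalences in $\Ab$, which here just means isomorphisms since $\Ab$ carries the trivial model structure).

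The concrete steps I would carry out are as follows. First, recall that the stable model structure is the left Bousfield localization of the Cisinski--Moerdijk model structure at the maps needed to make $\infty$-operads into Picard-$\infty$-groupoids; its trivial cofibrations are generated (as a saturated class) by the inner horn inclusions $\Lambda^e[T] \to \Omega[T]$, the root-horn / outer-horn inclusions relevant to the stable structure, together with the remaining generating stable trivial cofibrations coming from the Bousfield localization, all of which are normal monomorphisms that become weak equivalences. Second, I would verify that $K_0$ carries each such generating map to an isomorphism. For the inner and outer horn inclusions $\Lambda^a[T] \to \Omega[T]$ this is exactly Lemma \ref{adjoint}(2). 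For the root horns $\eta_{a} \sqcup \eta_{b} \to \Omega[C_2]$ and $\emptyset \to \Omega[C_0]$, and more generally for the extra localizing maps, I would check directly from Definition \ref{def:knull} and Example \ref{exampleknull} that the induced map on $K_0$ is an isomorphism — these are the maps encoding invertibility of the monoidal product, and on $K_0$ they correspond to passing to a group completion that is already a group. Third, since $K_0$ preserves all colimits (Lemma \ref{adjoint}(1)) and isomorphisms are closed under pushout, retract and transfinite composition in $\Ab$, it follows that $K_0$ sends the entire saturation of these generating maps — i.e. every stable trivial cofibration — to an isomorphism.

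To finish, I would use a standard functorial-factorization argument. Given an arbitrary stable equivalence $f \colon D \to D'$, choose functorial fibrant replacements $D \to D_K$ and $D' \to D'_K$ in the stable model structure (as mentioned after the Ba\v{s}i\'{c}--N. theorem), giving a commuting square with horizontal maps stable trivial cofibrations and hence inducing isomorphisms on $K_0$ by the previous paragraph; thus it suffices to treat $f_K \colon D_K \to D'_K$, a stable equivalence between fibrant objects. Such a map is a homotopy equivalence, so it admits a homotopy inverse up to the cylinder in $\dSet$; applying $K_0$ and using that $K_0$ of a cylinder object maps isomorphically to $K_0$ of the object (again because the inclusions $D \sqcup D \to \mathrm{Cyl}(D)$ are trivial cofibrations, or directly because $K_0$ inverts trivial cofibrations), one concludes that $(f_K)_*$ is an isomorphism, hence so is $f_*$. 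The last sentence of the proposition is then immediate, since every operadic equivalence is a stable equivalence.

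The main obstacle I anticipate is the second step: pinning down an explicit, workable generating set for the stable trivial cofibrations and checking that $K_0$ inverts each generator. The horn inclusions are handled by Lemma \ref{adjoint}(2), but the additional Bousfield-localizing maps are defined rather indirectly, so the cleanest route may be to avoid them entirely and argue instead that $f_* \colon K_0(D) \to K_0(D')$ factors through $K_0(D_K) = K_0(D'_K)$, using that $K_0(D) \cong \pi_0(i^*D_K)$ is itself a homotopy-invariant quantity — but establishing $K_0(D) \cong K_0(D_K)$ without circularity is precisely the content we are trying to prove. So the real work is isolating enough of the structure of the stable model category to see that the fibrant-replacement map induces an isomorphism on $K_0$, after which homotopy invariance among fibrant objects is the easy part.
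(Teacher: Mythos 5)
Your overall skeleton (reduce to a fibrant replacement $D\to D_K$, then handle the induced map between fully Kan objects) is the same as the paper's, but the central step is left as an acknowledged hole rather than proved, and the workaround you sketch does not close it. Concretely: you propose to show that $K_0$ inverts \emph{all} stable trivial cofibrations by checking a generating set. But the stable model structure is obtained as a left Bousfield localization, and such localizations do not come with an explicit set of generating trivial cofibrations; the paper never produces one, and your phrase ``the remaining generating stable trivial cofibrations coming from the Bousfield localization'' names a class of maps you cannot write down, let alone feed into Definition \ref{def:knull}. You flag this yourself and then observe that the obvious escape (``$K_0(D)\cong\pi_0(i^*D_K)$ is homotopy invariant'') is circular. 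So as written the argument does not go through. The way out — and this is what the paper does — is to note that you never need all trivial cofibrations: the small-object-argument fibrant replacement $D\to D_K$ is a transfinite composition of pushouts of coproducts of \emph{horn inclusions only}, so Lemma \ref{adjoint}(2) together with colimit preservation (Lemma \ref{adjoint}(1)) already gives $K_0(D)\iso K_0(D_K)$, with no mention of a generating set for the localized trivial cofibrations.

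Your treatment of the remaining map $f_K\colon D_K\to D_K'$ also differs from the paper's and inherits the same gap: the cylinder/homotopy-inverse argument needs $K_0$ to identify the two legs $D_K\rightrightarrows \mathrm{Cyl}(D_K)$, which again requires inverting trivial cofibrations you have not controlled (and, as a secondary point, a weak equivalence between fibrant objects is only a homotopy equivalence if the objects are also cofibrant, so you would additionally have to normalize). The paper avoids all of this: since $i^*$ is right Quillen for the stable structure, $i^*f_K$ is a weak equivalence of Kan complexes, hence a bijection on $\pi_0$, and Proposition \ref{prop:k}(\ref{item:eins}) identifies $K_0(D_K)$ with $\pi_0(i^*D_K)$ for fully Kan $D_K$. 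That identification is the key input for the fibrant case, and your proposal does not use it. Your side observation that $i(A)$ is fully Kan is correct and suggests an alternative abstract route (showing $(K_0,i)$ is Quillen for the trivial model structure on $\Ab$ and applying Ken Brown's lemma), but that would require proving $i$ sends all homomorphisms to stable fibrations, which you have not done.
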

\begin{proof}
We first show that for $D$ a dendroidal set and $D_K$ the fibrant replacement
obtained by Quillen's small object argument the induced morphism $K_0(D)\to
K_0(D_K)$ is an isomorphism. Therefore remember that $D_K$ is built as the
directed colimit
$$ D_K = \indlim( D_0 \to D_1 \to D_2 \to ... )$$
where $D_0 = D$ and each $D_{n+1}$ is obtained by attaching trees along horns to
$D_n$. Since $K_0$ is left adjoint by Lemma \ref{adjoint} we have 
$$ K_0(D_K) \cong \indlim( K_0(D_0) \to K_0(D_1) \to K_0(D_2)\to ... ). $$
Hence it suffices to check that each group homomorphism $K_0(D_n) \to
K_0(D_{n+1})$ is an isomorphism. To see this note that
$D_{n+1}$ is build as a pushout of the form
$$\xymatrix{
\bigsqcup \Lambda^a[T] \ar[r]\ar[d] & D_n\ar[d]  \\
\bigsqcup \Omega[T] \ar[r] & D_{n+1}
}$$
Applying $K_0$ yields a pushout diagram
$$\xymatrix{
\bigoplus K_0(\Lambda^a[T]) \ar[r]\ar[d] & K_0(D_n)\ar[d] \\
\bigoplus K_0(\Omega[T]) \ar[r] & K_0(D_{n+1})
}$$
where the left vertical morphism is, by Lemma \ref{adjoint}, an
isomorphism. Thus the right vertical morphism is an isomorphism too. Altogether
this shows that the morphism $K_0(D)\to K_0(D_K)$ is an isomorphism of groups. 

Now assume we have an arbitrary stable equivalence $f: D \to D'$ of dendroidal
sets. Applying the fibrant replacement described above to both objects yields a
weak equivalence $f_K: D_K \to D_K'$. By the above argument it remains only to
check that the induced morphism $(f_K)_*: K_0(D_K) \to K_0(D_K')$ is an
isomorphism. By the fact that $i^*$ is right Quillen we know that $f_K$ induces
a weak equivalence of simplicial sets $i^*(D_K) \to i^*(D_K')$. Together with
Proposition \ref{prop:k} \eqref{item:eins} this shows that $(f_K)_*: K_0(D_K)
\to K_0(D_K')$ is an isomorphism, hence  $f_*: K_0(D) \to K_0(D')$ is too.

Finally the statement about operadic equivalences follows since each operadic
equivalence is a stable equivalence.
\end{proof}

\section{Higher \texorpdfstring{$K$}{K}-groups of dendroidal
sets}\label{sec_higher}

In this section we want to define higher $K$-groups of a dendroidal set $D$.
Note that by the results of
the last section we can compute $K_0(D)$ for a dendroidal set $D$ always as 
follows: choose a fully Kan replacement $D_K$ of $D$. Then by Proposition
\ref{prop:knull} the inclusion $D \to D_K$ induces an 
isomorphism $K_0(D) \iso K_0(D_K)$. But by Proposition
\ref{prop:k}\eqref{item:eins} we can compute $K_0(D_K)$ as the connected components of the underlying space 
$i^*(D_K)$. The idea for the higher $K$-groups is to generalize this procedure. 
\begin{definition}\label{def:higher}
Let $D$ be a dendroidal set and $D_K$ be a fully Kan replacement. We define 
$$ K_n(D) := \pi_n(i^*D_K). $$
The space $i^*D_K$ is also called the \emph{derived underlying space} of $D$.
\end{definition}

\begin{remark}
\begin{itemize}
\item
There is a subtlety involved in the above definition, namely the choice of
basepoint in $i^*D_K$ to compute the higher homotopy groups. It turns out that
the choice of basepoint is inessential since $i^*D_K$ is an infinite loop space
as we will see later. 
\item The fact that $i^*D_K$ is an infinite loop space also implies that all the
$K_n(D)$ are abelian groups (especially $K_0$ and $K_1$).
\item
Note that in order to turn $K_n$ into functors $\dSet \to \Ab$ we have to make
functorial choices of fibrant replacements. This can, e.g., be done using
Quillen's small object argument. This  also solves the problem of basepoints, 
since then
the fibrant replacement $D_K$ has a distinguished morphism $\Omega[C_0] \to D_K$
coming from gluing in $\Omega[C_0]$ along its outer horn $\emptyset$.  After
applying $i^*$ this leads to a (functorial) choice of basepoint $\Delta[0] \to
i^*D_K$. 
\end{itemize}
\end{remark}

\begin{proposition}\phantomsection\label{prop:hk}
\begin{enumerate}
\item Definition \ref{def:knull} and \ref{def:higher} of $K_0$ agree, i.e the
groups are canonically isomorphic. 
\item The higher $K$-groups are well-defined, i.e. for two fully Kan
replacements $D_K$ and $D'_K$ of $D$ there is an  isomorphism $\pi_n(i^*D_K)
\iso \pi_n(i^*D'_K)$.
\item For a stable equivalence between dendroidal sets the induced morphisms on
$K$-groups are isomorphisms. In particular, for stably equivalent dendroidal
sets the $K$-groups are isomorphic.

Conversely if for a morphism $f: D \to D'$ the induced maps $f_*: K_n(D) \to
K_n(D')$ are isomorphisms then $f_*$ is a stable equivalence.
\end{enumerate}
\end{proposition}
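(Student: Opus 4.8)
The plan is to prove the three parts in order, leveraging the fact that everything is controlled by the functorial fibrant replacement $D \mapsto D_K$ in the stable model structure together with the right Quillen functor $i^*$ to the Kan--Quillen model structure on $\sSet$.

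For part (1), I would compare Definition \ref{def:knull} with $\pi_0(i^*D_K)$. By Proposition \ref{prop:knull} the map $K_0(D) \to K_0(D_K)$ induced by the stable equivalence $D \to D_K$ is an isomorphism (in the sense of Definition \ref{def:knull}), and by Proposition \ref{prop:k}\eqref{item:eins} the map $\lambda_{D_K}\colon \pi_0(i^*D_K) \to K_0(D_K)$ is a bijection since $D_K$ is fully Kan. Composing, we get a canonical isomorphism $\pi_0(i^*D_K) \cong K_0(D)$, which is exactly the assertion. One should check this is compatible with the group structures, but that follows since $\lambda_{D_K}$ is a group homomorphism and the group structure on $\pi_0(i^*D_K)$ constructed in the proof of Proposition \ref{prop:k}\eqref{item:eins} is the relevant one.

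For part (2), given two fully Kan replacements $D \to D_K$ and $D \to D_K'$, both are fibrant replacements in the stable model structure, so by the usual model-categorical argument there is a stable equivalence $D_K \to D_K'$ under $D$ (unique up to homotopy). Since $i^*$ is right Quillen for the stable model structure and the Kan--Quillen model structure, it sends the stable equivalence $D_K \to D_K'$ between fibrant objects to a weak homotopy equivalence $i^*D_K \to i^*D_K'$, which therefore induces isomorphisms on all $\pi_n$. (For the basepoint issue, I would use the functorial replacement coming from the small object argument, which as noted in the remark equips $D_K$ with a canonical basepoint $\Delta[0] \to i^*D_K$; alternatively one observes a posteriori that $i^*D_K$ is a simple space, in fact an infinite loop space, so basepoints are irrelevant.)

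For part (3), the forward direction: a stable equivalence $f\colon D \to D'$ induces, after applying functorial fibrant replacement, a stable equivalence $f_K\colon D_K \to D_K'$ between fibrant objects; applying the right Quillen functor $i^*$ gives a weak homotopy equivalence $i^*D_K \to i^*D_K'$, hence isomorphisms $K_n(D) = \pi_n(i^*D_K) \cong \pi_n(i^*D_K') = K_n(D')$. The converse is the part I expect to be the main obstacle: one must show that if $f_*\colon K_n(D) \to K_n(D')$ is an isomorphism for all $n$, then $f$ is a stable equivalence. The natural strategy is to reduce to fibrant objects via $f_K\colon D_K \to D_K'$ (whose induced maps on $K$-groups agree with those of $f$ by parts (1)--(2)), and then invoke a criterion characterizing stable equivalences between fully Kan dendroidal sets in terms of the derived underlying space. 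Concretely, since fully Kan dendroidal sets model connective spectra / Picard-$\infty$-groupoids by the main result of \cite{BaNi}, a map between them is a stable equivalence precisely when the associated map of underlying spaces is a weak equivalence; and for infinite loop spaces (which the $i^*D_K$ are) a map inducing isomorphisms on all homotopy groups is a weak equivalence. The one genuinely delicate point is that $f_K\colon i^*D_K \to i^*D_K'$ inducing $\pi_n$-isomorphisms needs to be upgraded to a weak equivalence of spaces, which requires knowing these spaces are path-connected in the appropriate sense or that $f_K$ respects enough structure — this is where one must cite the equivalence of homotopy categories from \cite{BaNi} (or the identification of $i^*D_K$ as an infinite loop space, which makes it simple and hence reduces weak equivalence to $\pi_*$-isomorphism). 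I would isolate that citation as the crux and keep the rest as formal model-category bookkeeping.
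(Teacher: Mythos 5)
Your proposal is correct and follows essentially the same route as the paper: part (1) via Propositions \ref{prop:knull} and \ref{prop:k}\eqref{item:eins}, part (2) by lifting to a stable equivalence $D_K \to D_K'$ between fibrant replacements and using that $i^*$ is right Quillen, and part (3) by reducing to $f_K$ and invoking the characterization from \cite{BaNi} of stable equivalences between fully Kan dendroidal sets as those inducing weak equivalences on derived underlying spaces. The "delicate point" you isolate (upgrading $\pi_*$-isomorphisms to a weak equivalence) is exactly the content of the cited \cite[Theorem 4.2 (3)]{BaNi} together with the infinite-loop-space structure, just as the paper does.
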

\begin{proof}
1) This follows directly from our remarks preceding Definition \ref{def:higher}.

2) The two fibrant replacements $D_K$ and $D_K'$ are related by the chain $D_K
\leftarrow D \rightarrow D_K'$ of weak equivalences. As a lift in the diagram
$$\xymatrix{
D \ar[d] \ar[r] & D_K' \ar[d]\\
D_K \ar[r] & {*}
}$$
we find a morphism $f: D_K \to D_K'$ which is by the 2-out-of-3 property a
stable equivalence. By the fact that $i^*$ is right Quillen this implies that
$i^*f: i^*D_K \to i^*D_K'$ is a weak equivalence of simplicial set. Therefore
$f_*: \pi_n\big(i^*D_K\big) \to \pi_n\big(i^*D_K'\big)$ is an isomorphism.

3) The morphism $f: D \to D'$ induces a morphism between fibrant replacements
$f_K: D_K \to D_{K'}$. Then $f$ is a stable equivalence if and only if $f_K$ is
a stable equivalence. By \cite[Theorem 4.2 (3)]{BaNi} this is equivalent to the
fact that $i^*f_K: i^*D_K \to i^*D_{K'}$ is a weak equivalence of simplicial
sets. And this last condition is equivalent to the fact that the induced
morphisms on homotopy groups, which are the $K$-groups of $D$ and $D'$, are
isomorphisms. 
\end{proof}

\begin{corollary}
For an operadic equivalence $f: D \to D'$ of dendroidal sets, the induced
morphisms
$f_*: K_n(D) \to K_n(D')$ are isomorphisms. Hence the $K$-groups are an
invariant of $\infty$-operads.
\end{corollary}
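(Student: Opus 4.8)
The plan is to reduce the statement to Proposition \ref{prop:hk}(3) by invoking the comparison between the two model structures on $\dSet$. First I would recall the fact, noted in Section \ref{sec_pre} right after the theorem of Ba\v{s}i\'{c} and N., that the stable model structure is a left Bousfield localization of the Cisinski-Moerdijk model structure. Since a left Bousfield localization only enlarges the class of weak equivalences, every operadic equivalence is in particular a stable equivalence. Thus an operadic equivalence $f\colon D \to D'$ is a stable equivalence of dendroidal sets, and Proposition \ref{prop:hk}(3) applies verbatim: the induced maps $f_*\colon K_n(D) \to K_n(D')$ are isomorphisms for all $n \geq 0$. (Unwinding Proposition \ref{prop:hk}(3), this amounts to: $f$ induces a stable equivalence $f_K\colon D_K \to D'_K$ between functorial fully Kan replacements, hence by right-Quillenness of $i^*$ a weak equivalence $i^*f_K\colon i^*D_K \to i^*D'_K$ of simplicial sets, hence an isomorphism on all homotopy groups, which by definition are the $K$-groups.)

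For the second assertion, that the $K$-groups are an invariant of $\infty$-operads, I would argue as follows. The $\infty$-operads are precisely the fibrant objects of the Cisinski-Moerdijk model structure and the operadic equivalences are its weak equivalences; hence two $\infty$-operads that become isomorphic in the associated homotopy category are connected by a zig-zag of operadic equivalences between (cofibrant-)fibrant objects, and by the first part each arrow in such a zig-zag induces isomorphisms on all $K_n$. Using the functoriality of $K_n\colon \dSet \to \Ab$ coming from the functorial fibrant replacement, these isomorphisms are natural, so $K_n$ descends to a well-defined functor on the homotopy category of $\infty$-operads.

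I do not expect a genuine obstacle here: the corollary is essentially a repackaging of Proposition \ref{prop:hk}(3) together with the already-recorded inclusion of weak equivalences. The only point that requires a moment's care is the direction of that inclusion — one must use that passing to a left Bousfield localization makes the class of weak equivalences \emph{larger}, so that operadic equivalences form a subclass of stable equivalences (and not conversely), which is exactly what is needed to feed into Proposition \ref{prop:hk}(3).
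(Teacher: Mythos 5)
Your argument is correct and is exactly the paper's own proof: the stable model structure is a left Bousfield localization of the Cisinski--Moerdijk model structure, so operadic equivalences are stable equivalences, and Proposition \ref{prop:hk}(3) applies. The extra unwinding and the discussion of descent to the homotopy category are fine but not needed beyond what the paper records.
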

\begin{proof}
This follows from the fact, that the stable model structure is a left Bousfield
localization of the Moerdijk-Cisinski model structure. So the weak operadic
equivalences are also stable equivalences.
\end{proof}

\begin{example}
Let $\mathcal{E}_\infty$ in $\dSet$ be a cofibrant resolution of the point $*
\in \dSet$. This is the dendroidal version of an $E_\infty$-operad. Then we have 
$K_n(\mathcal{E}_\infty) \cong K_n(*) \cong 0$ for all $n$.

There are also dendroidal versions $\mathcal{E}_k$ of the little $k$-disks operad. One can show that we also have $K_n(\mathcal{E}_k) \cong 0$ for
all $n$. We will not do this here, since it is most easily deduced using
monoidal properties of $K$-theory which will be investigated elsewhere.
\end{example}

\begin{proposition}
Let $D = \indlim~ D_i$ be a filtered colimit of dendroidal sets $D_i$. Then we
have
$$ K_n(D) \cong \indlim~ K_n\big(D_i\big).$$
\end{proposition}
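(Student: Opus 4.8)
The plan is to reduce everything to a statement about homotopy groups of simplicial sets and to commute three operations past the filtered colimit: the functorial fibrant replacement $(-)_K$, the restriction functor $i^*$, and the homotopy-group functor $\pi_n$. The last of these is classical: homotopy groups of simplicial sets (or spaces) commute with filtered colimits, since spheres are compact. The functor $i^*$ has a right adjoint $i_*$ and also a left adjoint, hence preserves all colimits, so in particular $i^*(\indlim\, D_i) \cong \indlim\, i^*(D_i)$. Therefore the only genuine content is to show that the functorial fibrant replacement built from Quillen's small object argument commutes with filtered colimits, i.e. that $(\indlim\, D_i)_K \to \indlim\, (D_i)_K$ is a stable equivalence (indeed one can arrange it to be an isomorphism).

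First I would recall the construction of $D_K$ as the transfinite composite $D = D_0 \to D_1 \to D_2 \to \cdots$ where $D_{m+1}$ is obtained from $D_m$ by the standard pushout gluing in fillers for all horns $\Lambda^a[T] \to D_m$. The generating set of horn inclusions $\{\Lambda^a[T] \hookrightarrow \Omega[T]\}$ has compact (finitely presentable) domains and codomains in $\dSet$, since each $\Omega[T]$ has only finitely many nondegenerate dendrices. Consequently, for a filtered diagram $\{D_i\}$, every map $\Lambda^a[T] \to \indlim\, D_i$ factors through some $D_i$, and the small-object-argument functor commutes with the filtered colimit at each stage: $(\indlim_i D_i)_m \cong \indlim_i (D_i)_m$ for every $m$ in the transfinite sequence, by an induction on $m$ using that pushouts, filtered colimits and the indexing coproducts over horns all commute with filtered colimits (the index set of horns into $\indlim_i D_i$ is itself the filtered colimit of the index sets of horns into the $D_i$). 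Passing to the colimit over $m$ gives $(\indlim_i D_i)_K \cong \indlim_i (D_i)_K$ as dendroidal sets.

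Then I would put the pieces together: applying $i^*$ and using that it preserves colimits,
\begin{equation*}
i^*\big((\indlim_i D_i)_K\big) \;\cong\; i^*\big(\indlim_i (D_i)_K\big) \;\cong\; \indlim_i\, i^*\big((D_i)_K\big),
\end{equation*}
and all of the $i^*((D_i)_K)$ are fibrant (fully Kan, hence Kan complexes after $i^*$), and the transition maps are cofibrations of simplicial sets, so this filtered colimit computes the homotopy colimit; taking $\pi_n$ and using that $\pi_n$ commutes with filtered colimits of simplicial sets yields
\begin{equation*}
K_n(D) \;=\; \pi_n\big(i^*(D_K)\big) \;\cong\; \indlim_i\, \pi_n\big(i^*((D_i)_K)\big) \;=\; \indlim_i\, K_n(D_i).
\end{equation*}
One should be slightly careful with basepoints, but as noted in the remark after Definition \ref{def:higher} the functorial replacement supplies a compatible basepoint $\Delta[0] \to i^*((D_i)_K)$ for every $i$, and these are compatible along the diagram, so the colimit of based spaces is unproblematic.

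The main obstacle is the middle step: verifying that the small-object-argument fibrant replacement commutes with filtered colimits. The key input making this work is the \emph{finiteness} of the trees $T$ — the horn inclusions have finitely presentable domains and codomains — so that the attaching maps and the indexing sets of horns behave well under filtered colimits. If one preferred to avoid an explicit analysis of the construction, an alternative is to argue more softly that $\indlim_i (D_i)_K$ is fully Kan (a filtered colimit of fibrant objects along cofibrations is fibrant, because lifting against a compact horn factors through a stage) and that $\indlim_i D_i \to \indlim_i (D_i)_K$ is a stable equivalence (a filtered colimit of stable equivalences along cofibrations is a stable equivalence, since stable equivalences between dendroidal sets are detected on $i^*$ of fibrant replacements by \cite[Theorem 4.2]{BaNi}, and weak equivalences of simplicial sets are closed under such colimits); this exhibits $\indlim_i (D_i)_K$ as \emph{a} fibrant replacement of $D$, which by Proposition \ref{prop:hk}(2) is all that is needed.
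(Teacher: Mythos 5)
Your proof is correct and follows essentially the same route as the paper: commute the fibrant replacement, the functor $i^*$ (which preserves colimits since it is also a left adjoint to $i_*$), and $\pi_n$ past the filtered colimit. The only difference is that where the paper cites general results on accessible replacement monads (or an explicit construction) for the fact that the functorial fibrant replacement preserves filtered colimits, you derive it directly from the finite presentability of the horn inclusions in the small object argument, which is a perfectly adequate substitute.
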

\begin{proof}
We will use the fact that there is an endofunctor $T: \dSet \to \dSet$ with the
property that when applied to a dendroidal set $D$ it produces a fibrant replacement
$T(D)$ and furthermore $T$ preserves filtered colimits. 
To see that such a $T$ exists one can use the fact that $\dSet$ is combinatorial
and use a general existence result for accessible fibrant replacement functors (see, e.g.,
\cite[Proposition A.1.2.5]{HTT}) or 
use \cite[Proposition 2.23]{AlgKan} for an explicit construction.

Now for a given filtered colimit $D = \indlim D_i$ we obtain a replacement 
$T(D) = \indlim T\big(D_i)$. Then note that the
 functor $i^*: \dSet \to \sSet$ is not only right adjoint to $i_!$ but also left
adjoint to a functor $i_*$. Hence it also preserves filtered colimits. 
Therefore we have $i^*T(D) \cong \indlim~ i^*T(D_i)$
which implies 
\begin{center}
$K_n(D) = \pi_n\big(i^*T(D)\big) \cong \indlim~\pi_n\big(i^*T(D_i)\big)\cong
\indlim~ K_n(D_i).$
\end{center}
\end{proof}

There are two model structures on the category of dendroidal sets: the stable model structure and the 
operadic (Cisinski-Moerdijk) model
structure. The fact that the stable model structure is a left
Bousfield localization of the operadic model structure implies that an operadic
cofibre sequence is also a stable cofibre sequence. In the following by 
cofibre sequence we mean cofibre sequence in the stable model structure and
thereby we also cover the case of cofibre sequences in the Cisinski-Moerdijk model structure. The prototypical example is
induced by a normal inclusion of a dendroidal subset:
\begin{equation*}
D_0 \hookrightarrow D \twoheadrightarrow D/D_0.
\end{equation*}

\begin{proposition}
For a cofibre sequence $X \to Y \to Z$ in $\dSet$ we obtain a fibre sequence
$i^*X_K \to i^*Y_K \to i^*Z_K$ of spaces and thus a 
long exact sequence
\begin{equation*}
\xymatrix{
  \ar[r] &
  K_n(X) \ar[r] & 
  K_n(Y) \ar[r]& 
  K_n(Z) \ar`r[d]`[l]`[llld]`[dll][dll]& \\
 & K_{n-1}(X) \ar[r] & 
 \cdots  \ar[r]& 
 K_1(Z) \ar`r[d]`[l]`[llld]`[dll][dll]& \\ 
 &K_0(X) \ar[r] & K_0(Y) \ar[r] & K_0(Z) \ar[r] & 0 \text{.} & & 
}
\end{equation*}
of $K$-theory groups.
\end{proposition}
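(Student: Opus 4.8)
The plan is to reduce the statement to a fact about fibre sequences of spaces by passing to fibrant replacements. The first step is to construct, for a given cofibre sequence $X \to Y \to Z$, a strictly compatible choice of fully Kan replacements fitting into a diagram
\begin{equation*}
\xymatrix{
X_K \ar[r] & Y_K \ar[r] & Z_K
}
\end{equation*}
such that $X_K \to Y_K$ is a normal monomorphism (a cofibration) and $Z_K$ is a model for the cofibre $Y_K / X_K$. Concretely, one first replaces the original sequence by a cofibration $X \hookrightarrow Y$ with quotient $Z = Y/X$ (every cofibre sequence is equivalent to one of this prototypical form, as noted in the excerpt), applies the functorial fibrant replacement $T$ from Quillen's small object argument to the map $X \to Y$ to obtain a cofibration $X_K \hookrightarrow Y_K'$ between fully Kan objects, and then sets $Z_K := Y_K' / X_K$ together with a further fibrant replacement if needed. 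Since $X_K$ and $Y_K'$ are already fibrant and the model structure is left proper (it is a Bousfield localization of a left proper structure, cofibrantly generated on a presheaf topos), the quotient $Y_K'/X_K$ computes the homotopy cofibre, and one checks it is stably equivalent to $Z_K$.

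The second step is to apply $i^*$ and invoke that $i^*$ is right Quillen for the stable model structure (against the Kan–Quillen model structure on $\sSet$), hence preserves fibrant objects and fibrations between them. The key input beyond this is the main result of \cite{BaNi}: fully Kan dendroidal sets model connective spectra, and under this correspondence $i^*$ of a fibrant object is an infinite loop space, i.e.\ the underlying space of a connective spectrum. In particular a homotopy cofibre sequence of fully Kan dendroidal sets, once we know it corresponds to a (co)fibre sequence of connective spectra, is sent by $i^*$ to a fibre sequence of infinite loop spaces — because in the stable world cofibre and fibre sequences agree, and the infinite-loop-space functor preserves fibre sequences. Thus $i^* X_K \to i^* Y_K \to i^* Z_K$ is a fibre sequence of spaces with $i^* X_K$ the homotopy fibre; applying $\pi_*$ and unwinding the definition $K_n(-) = \pi_n(i^*(-)_K)$ yields the long exact sequence, with the final term $0$ because everything is connective so $\pi_{-1}$ of the fibre vanishes. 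The identification of the connecting map and exactness at $K_0$ are then formal.

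The main obstacle is the first step: producing the cofibre sequence of \emph{fibrant} dendroidal sets that genuinely computes the homotopy cofibre of the original sequence, and verifying that $i^*$ applied to it gives the homotopy fibre sequence rather than merely a sequence. This requires left properness of the stable model structure (so that the strict quotient $Y_K'/X_K$ has the correct homotopy type) and, crucially, the translation via \cite{BaNi} between fully Kan dendroidal sets and connective spectra, under which a normal mono with its quotient corresponds to a cofibre sequence of spectra; only once we are on the spectrum side is it automatic that the underlying-space functor (which is right adjoint in the stable/spectral adjunction, hence preserves fibre sequences and is exact on connective objects up to the usual connectivity shift) turns this into a fibre sequence of spaces. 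I would also remark that for the Cisinski–Moerdijk case nothing extra is needed, since an operadic cofibre sequence is in particular a stable one.
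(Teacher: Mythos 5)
Your argument is correct in substance but takes a more self-contained route than the paper, whose entire proof is to quote Corollary 5.5 of \cite{BaNi} for the assertion that a cofibre sequence yields a fibre sequence of derived underlying spaces, and then to take the long exact sequence of homotopy groups of that fibre sequence. What you do instead is essentially reprove that cited corollary: you build a strict cofibre sequence of fully Kan replacements and then pass through the equivalence of the stable homotopy category of dendroidal sets with connective spectra (also from \cite{BaNi}; in the present paper the compatibility of the derived underlying space with $\Omega^\infty$ is only established later, in Theorem \ref{prop53}) to conclude that the cofibre sequence of connective spectra is also a fibre sequence and that $i^*$ of a fully Kan object realizes $\Omega^\infty$. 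That detour is sound --- connectivity of the fibre is exactly what produces the terminal $K_0(Y) \to K_0(Z) \to 0$, as you observe --- and it buys an actual argument rather than a citation, at the cost of invoking the heavier Quillen equivalence. One technical point to tighten: applying the functorial fibrant replacement to the map $X \hookrightarrow Y$ does not automatically produce a cofibration $X_K \to Y_K'$ between the replacements; you should first push out $Y$ along $X \to X_K$ and then factor the resulting map as a cofibration followed by a trivial fibration with fibrant target (left properness then guarantees the strict quotient still computes the homotopy cofibre), a step you half-acknowledge with ``a further fibrant replacement if needed'' but should make explicit.
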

\begin{proof}
The first assertion is Corollary 5.5. in \cite{BaNi} and the second is just the
long exact sequence of homotopy groups.
\end{proof}

\begin{example}
Let $D$ be an arbitrary dendroidal set. Then the sequence $\emptyset \to D \to
D$ is a cofibre sequence. Thus the long exact sequence implies that 
$K_n(\emptyset) = 0$ for all $n$. This in particular implies that the morphism
$\emptyset \to *$ is a stable equivalence and the homotopy category is pointed.
\end{example}

\section{The K-theory spectrum}\label{sec_spec}

\newcommand{\Fr}{\textnormal{Fr}}
\newcommand{\Ho}{\mathcal{H}o}
\newcommand{\St}{\widetilde{St}}

In \cite{BaNi} it was shown that the category $\dSet$ together with the stable
model structure is Quillen equivalent to the category of connective spectra.
The proof was based on results of \cite{Heuts1, Heuts2}. 
We do not want to go into the details of the construction here. We only briefly
note a few facts and refer to Appendix \ref{appendix} for more
details. First let $E_\infty$ denote the Barratt-Eccles operad, which is a
simplicial $E_\infty$-operad. The category of algebras for this operad is then
denoted by $\EsSet$ and carries a model structure induced from the model structure on simplicial sets.
 There is a functor 
\begin{equation*}
\widetilde{St}: \dSet \to \EsSet 
\end{equation*}
that is left Quillen and has the property that it maps operadic equivalences to
weak homotopy equivalences of $E_\infty$-spaces. Moreover it induces after
localization on both sides an equivalence of the stable homotopy category of dendroidal sets
to the homotopy-category of grouplike $E_\infty$-spaces. In particular $\St$ sends
stable equivalences to group-completion equivalences. The definition and
properties of the functor $\widetilde{St}$ can be found in Appendix
\ref{appendix} but the reader does not need to know the details of the
construction. They are just needed for the following lemma and the rest is
deduced by abstract reasoning.

\begin{lemma}\label{point}
The functor $\widetilde{St}$ sends the dendroidal set $\eta = i_!\Delta[0]$ to
an $E_\infty$-space which is weakly homotopy equivalent to the free
$E_\infty$-space on one generator. This $E_\infty$-space can be described as the
nerve of the category of finite sets with isomorphisms and tensor product given
by disjoint union. 
\end{lemma}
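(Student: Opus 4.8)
The plan is to unwind the definition of $\widetilde{St}$ on the representable object $\eta = \Omega[\,|\,] = i_!\Delta[0]$ and identify the result with the free $E_\infty$-space on one generator. First I would recall from the Appendix that $\widetilde{St}$ is built as a (pointwise) left Kan extension along a cosimplicial–codendroidal resolution, so that its value on a representable dendroidal set $\Omega[T]$ is directly computable: $\widetilde{St}(\Omega[T])$ is the $E_\infty$-space attached to the tree $T$ by the rectification functor, and for the trivial tree $|\,$ this should be, essentially by construction, the free $E_\infty$-algebra $\Fr(\ast)$ on a single point (the one generator corresponding to the unique color of $\eta$). Since $\widetilde{St}$ is left Quillen and $\eta$ is cofibrant (normal), this value is already the correct homotopy type, so no further fibrant/cofibrant replacement is needed.

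The second half of the statement is the identification of $\Fr(\ast)$ with the nerve of the category $\mathrm{FinSet}^{\mathrm{iso}}$ of finite sets and bijections, symmetric monoidal under disjoint union. Here I would invoke the standard description of the free $E_\infty$-space on a point: for the Barratt–Eccles operad $E_\infty$ one has $\Fr(\ast) \simeq \coprod_{n\ge 0} (E\Sigma_n \times_{\Sigma_n} \ast) = \coprod_{n\ge 0} B\Sigma_n$, and this coproduct, with its $E_\infty$-structure coming from block sum of permutations, is exactly $N(\mathrm{FinSet}^{\mathrm{iso}})$ with the disjoint-union symmetric monoidal structure — indeed $\mathrm{FinSet}^{\mathrm{iso}} \simeq \coprod_n B\Sigma_n$ as a groupoid, and the monoidal structure matches the operadic one. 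This is a well-known computation (it is the "free grouplike-after-completion" $E_\infty$-space whose group completion is the sphere spectrum), so I would state it with a reference rather than reprove it.

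The main obstacle is the first step: making precise that $\widetilde{St}$ evaluated on $\eta$ is literally (or weakly equivalently) the free $E_\infty$-space on one generator, rather than merely "an $E_\infty$-space with one generator up to homotopy." This requires knowing the explicit form of the straightening functor from the Appendix — in particular that the codendroidal object $T \mapsto \widetilde{St}(\Omega[T])$ sends the unit tree to $\Fr(\ast)$ and is suitably compatible with colimits. If the Appendix's construction is phrased via $i_!$ and the known fact (cited in the introduction from \cite{Heuts2, BaNi}) that $\widetilde{St}\circ i_!$ models the suspension-spectrum/infinite-loop functor $\Sigma^\infty_+$, then one can shortcut: $\eta = i_!\Delta[0]$, and $\Sigma^\infty_+$ applied to the point is the sphere spectrum, whose associated connective infinite loop space (before group completion, at the $E_\infty$-space level) is precisely $\coprod_n B\Sigma_n = N(\mathrm{FinSet}^{\mathrm{iso}})$. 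I would use whichever of these two routes the Appendix makes cleanest, and relegate the coherence bookkeeping (that the $E_\infty$-structures agree on the nose, not just that the underlying spaces do) to a remark, since it is exactly the kind of routine rectification argument the paper elsewhere defers.
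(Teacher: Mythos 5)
Your primary route --- unwinding the definition of the straightening on the representable $\eta$, identifying the result with the free $E_\infty$-algebra on one generator, and then using $\mathrm{Fr}_{E_\infty}(\Delta[0]) \cong \bigsqcup_n B\Sigma_n \cong N(\mathrm{FinSet}^{\mathrm{iso}})$ --- is exactly the paper's argument; the only step you elide is that $\widetilde{St}(\eta) = St_{E_\infty}(\eta \times \mathcal{E}_\infty)$ is not literally the value on a representable of $\dSet/\mathcal{E}_\infty$, so one first uses that $\eta \to \eta \times \mathcal{E}_\infty$ is an operadic equivalence over $\mathcal{E}_\infty$ (preserved by $St_{E_\infty}$) to reduce to computing $St_{E_\infty}(\eta)$ itself. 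Do avoid your proposed shortcut via $\widetilde{St}\circ i_! \simeq \Sigma^\infty_+$: in this paper that identification (Theorem \ref{prop53}) is deduced from the present lemma via Lemma \ref{keylemma}, so invoking it here would be circular.
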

\begin{proof}
We show this by explicitly computing the functor. By definition, $\eta$ gets send
to $\widetilde{St}(\eta) = St_{E_\infty}(\eta \times \mathcal E_\infty)$ as 
described in the Appendix.
The first thing we use is that $\eta$ is cofibrant as a dendroidal set, and thus
admits a morphism 
$\eta \to \mathcal{E}_\infty$ (for the choices we have made in the Appendix this morphism 
is
actually unique). Thus there is a morphism $\eta \to \eta \times
\mathcal{E}_\infty$ over $\mathcal{E}_\infty$ which is an operadic equivalence
. Therefore we have that
$St_{E_\infty}(\eta) \simeq St(\eta \times \mathcal E_\infty)$ and it only remains to
compute $St(\eta)$. Using the definition of the straightening (see \cite{Heuts1} or Appendix \ref{appendix})  it is easy to see
that $St_{E_\infty}(\eta)$ is the free $E_\infty$-space on one generator.
Finally we note that for $E_\infty$ the Barratt-Eccles Operad we immediately get
\begin{equation*}
\Fr_{E_\infty}(\Delta[0]) \cong \bigsqcup_{n \in \mathbb{N}} B\Sigma_n  \cong N(\text{FinSet})
\end{equation*}
using the usual formula for free algebras over operads.
\end{proof}

\begin{lemma}\phantomsection\label{keylemma}
\begin{enumerate}
\item
For $X \in \sSet$ there is a natural isomorphisms $\widetilde{St}(i_!X) \cong
\Fr_{E_\infty}(X)$ in the homotopy category of $E_\infty$-spaces (with 
weak homotopy equivalences inverted).
\item
For $D \in \dSet$ there is a natural isomorphism in the homotopy category of
simplicial sets from the derived underlying space $i^*D_K$ to the underlying
simplicial set $\Omega B(\widetilde{St}D)$ of the group-completion  of the
$E_\infty$-space $\widetilde{St}D$.
\end{enumerate}
\end{lemma}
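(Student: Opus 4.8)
\textbf{Proof plan for Lemma \ref{keylemma}.}

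The plan is to deduce part (1) from Lemma \ref{point} together with the universal properties of the functors involved, and then to obtain part (2) from part (1) by a Bousfield-localization argument. For part (1), I would first recall that $i_!$ preserves colimits (it is a left adjoint) and that every simplicial set $X$ is the colimit of its simplices, so $i_!X = \colim_{\Delta[n] \to X} i_!\Delta[n]$; since $\Delta[n]$ is contractible, $i_!\Delta[n] = i_!\Delta[0] = \eta$ up to operadic equivalence. Next I would observe that $\widetilde{St}$ is left Quillen, hence preserves homotopy colimits, and that $\Fr_{E_\infty}$ likewise preserves homotopy colimits (it is the free-algebra left adjoint composed with the colimit-preserving inclusion of spaces). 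Therefore both sides of the claimed isomorphism are homotopy-colimit-preserving functors $\sSet \to \EsSet$, and by Lemma \ref{point} they agree on $\Delta[0]$; a standard argument (every simplicial set is a homotopy colimit of copies of $\Delta[0]$ along the category of simplices, or more carefully a homotopy colimit built from $\partial\Delta[n] \hookrightarrow \Delta[n]$ cell attachments) then forces them to be naturally isomorphic in the homotopy category. One must take a little care that the natural transformation can be exhibited concretely rather than just abstractly; I would produce it by using a cofibrant/explicit model of $\Fr_{E_\infty}$ and the compatibility of $St$ with the free-algebra structure already visible in the proof of Lemma \ref{point}.

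For part (2), the key input is the sentence in the text preceding the lemma: after localization, $\widetilde{St}$ induces an equivalence from the stable homotopy category of dendroidal sets to the homotopy category of grouplike $E_\infty$-spaces, and it sends stable equivalences to group-completion equivalences. Concretely, this means that for any $D$ the group-completion $\Omega B(\widetilde{St}D)$ represents the image of $D$ under the composite $\dSet \to \Ho(\dSet_{stable}) \xrightarrow{\sim} \Ho(\text{grouplike } E_\infty)$. On the other hand, passing to the fully Kan replacement $D \to D_K$ is precisely the fibrant replacement in the stable model structure, so it is a stable equivalence, and $i^*$ is right Quillen for the stable model structure (against the Kan--Quillen structure on $\sSet$), so $i^*D_K$ is the derived underlying space functor $\dSet \to \sSet$ that inverts stable equivalences. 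Thus it suffices to identify the two functors $\dSet \to \Ho(\sSet)$ given by $D \mapsto i^*D_K$ and $D \mapsto \Omega B(\widetilde{St}D)$: both invert stable equivalences, both preserve homotopy colimits (for the first this uses that $i^*$ is also a \emph{left} adjoint, as noted in the text, and that fibrant replacement preserves filtered colimits; for the second, $B$ and $\Omega$ of grouplike spaces and the forgetful functor all preserve the relevant homotopy colimits), and by part (1) combined with Lemma \ref{point} they agree on $\eta$: indeed $i^*\eta_K$ is the derived underlying space of $\eta$, whose homotopy groups are the stable stems (as asserted in the introduction), which matches $\Omega B \Fr_{E_\infty}(\Delta[0]) = \Omega B\big(\bigsqcup_n B\Sigma_n\big) = QS^0$. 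A functor out of $\dSet$ that preserves homotopy colimits and inverts stable equivalences is determined by its value on $\eta$, since $\eta$ generates $\dSet$ under homotopy colimits in the stable localization, so the two functors agree.

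The main obstacle will be making the ``determined by the value on $\eta$'' step rigorous rather than hand-wavy: one needs that the fully Kan replacement functor, or equivalently the localization $\dSet \to \Ho(\dSet_{stable})$, is generated under homotopy colimits by $\eta$ — this is essentially the statement that $\dSet_{stable}$ is equivalent to connective spectra with $\eta$ corresponding to the sphere, which is the main theorem of \cite{BaNi}, so I would cite it rather than reprove it, but I must be careful that the natural transformation comparing the two functors is defined \emph{before} localizing (e.g. as a zig-zag through explicit cofibrant models) so that naturality is not lost. A secondary point to watch is basepoints and the distinction between $\widetilde{St}D$ and its group completion: one should check that the $E_\infty$-space $\widetilde{St}D_K$ is already grouplike (because $D_K$ is fully Kan, which $\widetilde{St}$ should send to grouplike $E_\infty$-spaces), so that $\Omega B$ does not change it up to weak equivalence, and that $i^*$ applied to a fully Kan dendroidal set is the underlying space of that grouplike $E_\infty$-space — this is exactly the compatibility encoded in the definition of $\widetilde{St}$ recalled in the text, so it should follow by unwinding the construction in the Appendix.
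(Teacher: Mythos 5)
Your part (1) is essentially the paper's own argument: both $X \mapsto \widetilde{St}(i_!X)$ and $X \mapsto \mathrm{Fr}_{E_\infty}(X)$ are left Quillen functors out of $\sSet$, and such functors are determined up to natural weak equivalence by their value on the point (the paper invokes Dugger's universality of $\sSet$, resp.\ the $\infty$-categorical statement in HTT, which is exactly the cell-attachment argument you sketch, packaged so that naturality comes for free), so Lemma \ref{point} finishes it. That part is fine.

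Part (2) has a genuine gap. You propose to compare the two functors $D \mapsto i^*D_K$ and $D \mapsto \Omega B(\widetilde{St}D)$ into $\mathrm{Ho}(\sSet)$ by arguing that both invert stable equivalences, both preserve homotopy colimits, and both agree on $\eta$, which generates under homotopy colimits. The colimit-preservation claim is false: these are derived \emph{right} adjoints. Under the equivalence of the stable homotopy category of $\dSet$ with connective spectra, $D \mapsto i^*D_K$ corresponds to $\Omega^\infty$, which preserves filtered homotopy colimits (that is all that ``$i^*$ is also a left adjoint and fibrant replacement preserves filtered colimits'' buys you) but not homotopy pushouts: the stable homotopy cofibre of $\eta \to *$ has derived underlying space $Q S^1$, whereas the homotopy pushout of $* \leftarrow QS^0 \rightarrow *$ in spaces is $\Sigma QS^0 \not\simeq QS^1$. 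The underlying space of the group completion of $\widetilde{St}(D)$ fails to preserve pushouts for the same reason, so ``determined by the value on $\eta$'' does not apply; and even objectwise agreement would not by itself produce the required \emph{natural} isomorphism. The paper sidesteps all of this: part (1) says the triangle of \emph{left} adjoints $i_!$, $\mathrm{Fr}_{E_\infty}$, $\widetilde{St}$ commutes (hence also after further localizing $\EsSet$ at group-completion equivalences), and one then simply passes to right adjoints to get $Ri^* \cong RU^* \circ \widetilde{St}$; since the derived underlying-space functor $RU^*$ for the group-completion model structure is $X \mapsto \Omega B X$, this is precisely the assertion of part (2). Note also that your ``secondary point'' --- that $i^*$ of a fully Kan dendroidal set is the underlying space of the associated grouplike $E_\infty$-space --- is not something that drops out of unwinding the construction of $\widetilde{St}$ in the Appendix; it is essentially the content of part (2) itself, and the adjunction argument is what proves it.
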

\begin{proof}
For the proof of (1) we will use the fact that the homotopy category of
simplicial sets is the universal homotopy theory on a point.
A more precise statement using model categories is that left Quillen functors
from $\sSet$ to any model category $M$ are fully determined on the point
 (see \cite[Proposition 2.3 and Example 2.4]{duggerUniversal}). This means that evaluation on the point induces an equivalence between the homotopy category of left Quillen functors  $\sSet \to M$ and the homotopy category of $M$. 
  In the theory of
$\infty$-categories the universal property is that left adjoint functors 
from the $\infty$-categories of simplicial sets are determined on the point (see
\cite[Theorem 5.1.5.6]{HTT}). This means likewise that there is an equivalence between the $\infty$-category of left adjoint functors (in the $\infty$-sense) from the $\infty$-category of simplicial sets to any presentable $\infty$-category $\mathcal{C}$ and $\mathcal{C}$.

Using one of these two statements we see that the
two left Quillen functors $\sSet \to \EsSet$ given by 
\begin{equation*}
X \mapsto \Fr_{E_\infty}(X) \qquad \text{and} \qquad X \mapsto
\widetilde{St}(i_!X)
\end{equation*}
are isomorphic in the homotopy category if they agree on the point. But this is
the assertion of Lemma \ref{point}.\\

In order to prove the second statement we note that the first part of the lemma implies
that $\widetilde{St}(i_!X)$ and $\Fr_{E_\infty}(X)$ are also equivalent in the
group-completion model structure on $\EsSet$.
In other words the diagram of homotopy categories
\begin{equation*}
\xymatrix{
& \Ho(\sSet) \ar[rd]^{\Fr} \ar[ld]_{i_!} & \\
\Ho(\dSet_{stab}) \ar[rr]^{\widetilde{St}} && \Ho(\EsSet_{grp})
}
\end{equation*}
commutes (up to a natural isomorphism which we suppress). Replacing $\widetilde St$ by its inverse we get an isomorphism of
functors $i_! \cong \widetilde{St}^{-1} \circ \Fr_{E_\infty}$ on the respective
homotopy categories. Thus there is an induced isomorphism for the right adjoint
functors $Ri^* \cong RU^* \circ \widetilde{St}$. 
\begin{equation*}
\xymatrix{
& \Ho(\sSet)  & \\
\Ho(\dSet_{stab}) \ar[ru]^{Ri^*}\ar[rr]^{\widetilde{St}} && \Ho(\EsSet_{grp})\ar[lu]_{RU^*}
}
\end{equation*}

Here $Ri^*$ is the right
derived functor of the underlying space, i.e. given by $i^*D_K$ for a dendroidal
set $D$. The functor $RU^*$ is the right derived functor for the underlying space of an
$E_\infty$-space. Since we are dealing with the group completion model structure
the functor $RU^*$ is given by $\Omega B X$ for an $E_\infty$-space $X$ (or another model of the group completion). Together
this shows that $i^*D_k$ and the space $\Omega B \widetilde{St}(D)$ are
naturally equivalent in the homotopy category of simplicial sets.
\end{proof}

\begin{remark}
The natural isomorphisms constructed above are actually slightly more structured
than just transformations on the homotopy category. The proof shows, mutatis mutandis,  that they
are transformations of $\infty$-functors between $\infty$-categories.
\end{remark}

We have already mentioned that fully Kan dendroidal sets correspond to grouplike
$E_\infty$-spaces (this is the main result in \cite{BaNi}). On the other hand it is well-known
that grouplike $E_\infty$-spaces are essentially the same thing as connective
spectra. By means of a delooping machine we can define a functor 
\begin{equation*}
B^\infty: \EsSet \to \Spec
\end{equation*}
where $\Spec$ is the category of spectra. The functor $B^\infty$ can be chosen such that it
sends group completion equivalences of $E_\infty$-spaces (which are weak homotopy
equivalence after group completion) to stable equivalences and induces after
localization the desired equivalence of grouplike $E_\infty$-spaces to
connective spectra. We do not want to fix a specific choice of delooping machine
or model of spectra but refer the reader to the extensive literature on the topic
\cite{May74,may1977infinite, may1978uniqueness}. It is just important to note
that the spectrum $B^\infty X$ comes with a natural morphism 
\begin{equation*}
X \to \Omega^\infty B^\infty X
\end{equation*}
which is a weak homotopy equivalence if $X$ is grouplike. It follows that this morphism is a group-completion
if $X$ is not grouplike.

\begin{definition}
Let $D$ be a dendroidal set. We define the \emph{K-theory} spectrum of $D$ as
\begin{equation*}
\calK(D) := B^\infty\widetilde{St}(D).
\end{equation*}
This assignment defines  a functor $\calK: \dSet \to \Spec$ that preserves stable equivalences and induces an equivalence between the stable
homotopy categories of dendroidal sets and connective spectra (see \cite[Theorem
5.4]{BaNi}.)
\end{definition}

\begin{thm}\phantomsection\label{prop53}
\begin{enumerate}
\item
For a dendroidal set $D$ the derived underlying space $i^*D_K$ (Definition \ref{def:higher}) 
is naturally homotopy equivalent to $\Omega^{\infty}\calK(D)$. 
In particular $i^*D_K$ is an infinite loop space for
each dendroidal set $D$. Notably all $K_n(D)$ are abelian groups and we have  
\begin{equation*}
K_n(D) \cong \pi_n(\calK(D)). 
\end{equation*}
\item
For a simplicial set $X$ the spectrum $\calK(i_!X)$ is weakly equivalent to the
suspension spectrum $\Sigma^\infty_+X$.
\end{enumerate}
\end{thm}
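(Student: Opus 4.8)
The plan is to prove the two assertions of Theorem~\ref{prop53} in turn, with the second essentially reducing to Lemma~\ref{keylemma}.

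\textbf{Part (1).} The strategy is to assemble the natural identifications already in place. First I would recall that by Lemma~\ref{keylemma}(2) the derived underlying space $i^*D_K$ is naturally equivalent (in the homotopy category of simplicial sets, and in fact as $\infty$-functors by the preceding remark) to $\Omega B(\widetilde{St}D)$, the underlying space of the group completion of the $E_\infty$-space $\widetilde{St}D$. Next I would invoke the property of the delooping machine $B^\infty$: the natural map $X \to \Omega^\infty B^\infty X$ is a group completion for every $E_\infty$-space $X$, so applying this with $X = \widetilde{St}D$ gives a natural equivalence $\Omega B(\widetilde{St}D) \simeq \Omega^\infty B^\infty\widetilde{St}D = \Omega^\infty\calK(D)$. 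Composing, $i^*D_K \simeq \Omega^\infty\calK(D)$ naturally. Since any $\Omega^\infty$ of a spectrum is an $\infty$-loop space, $i^*D_K$ is an $\infty$-loop space; in particular its homotopy monoid $\pi_0$ and all higher homotopy groups are abelian, which settles the parenthetical claims about $K_0$ and $K_1$. Finally $K_n(D) = \pi_n(i^*D_K) \cong \pi_n(\Omega^\infty\calK(D)) = \pi_n(\calK(D))$ by definition of the homotopy groups of a spectrum. The one point requiring a word of care is the basepoint: one should note that the functorial fibrant replacement supplies a canonical basepoint $\Delta[0] \to i^*D_K$ coming from the glued-in $\Omega[C_0]$, and that under the equivalence above this corresponds to the canonical basepoint of $\Omega^\infty\calK(D)$, so the homotopy groups are computed consistently.

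\textbf{Part (2).} Here I would specialize Lemma~\ref{keylemma}(1), which gives a natural isomorphism $\widetilde{St}(i_!X) \cong \Fr_{E_\infty}(X)$ in the homotopy category of $E_\infty$-spaces. Applying $B^\infty$ yields $\calK(i_!X) = B^\infty\widetilde{St}(i_!X) \simeq B^\infty\Fr_{E_\infty}(X)$. It then remains to identify $B^\infty$ of the free $E_\infty$-space on a simplicial set $X$ with the suspension spectrum $\Sigma^\infty_+ X$. This is the classical statement that the free $E_\infty$-space (equivalently, the free grouplike $E_\infty$-space after group completion) on a space $X$ deloops to the suspension spectrum of $X_+$; concretely $\Fr_{E_\infty}(X) \simeq \bigsqcup_n (E\Sigma_n \times_{\Sigma_n} X^n)$ is the free $E_\infty$-space, its group completion is $\Omega^\infty\Sigma^\infty_+ X$ by the Barratt--Priddy--Quillen theorem, and any delooping machine sends it to $\Sigma^\infty_+ X$ by the uniqueness of infinite loop space machines \cite{may1978uniqueness}. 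I would cite this rather than reprove it.

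\textbf{Main obstacle.} The substantive content of both parts has already been packaged into Lemma~\ref{keylemma} and into the stated properties of $B^\infty$, so the proof itself is a short assembly. The only genuine subtlety is bookkeeping the naturality and basepoints: one must make sure the zig-zags of equivalences in Lemma~\ref{keylemma}(2) and the group-completion map $X \to \Omega^\infty B^\infty X$ can be strung together \emph{naturally} in $D$ (which the remark after Lemma~\ref{keylemma} asserts by upgrading everything to $\infty$-functors), and that the identification of $B^\infty\Fr_{E_\infty}(X)$ with $\Sigma^\infty_+ X$ is natural in $X$ as well. Modulo invoking these, the argument is formal.
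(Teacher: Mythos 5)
Your proof of part (1) is essentially the paper's: Lemma \ref{keylemma}(2) identifies $i^*D_K$ with $\Omega B(\widetilde{St}D)$, and the stated property of the delooping machine identifies that with $\Omega^\infty B^\infty\widetilde{St}(D)=\Omega^\infty\calK(D)$; your extra care about basepoints is reasonable but not something the paper dwells on. For part (2) you take a genuinely different route. You specialize Lemma \ref{keylemma}(1) to get $\widetilde{St}(i_!X)\simeq \Fr_{E_\infty}(X)$, apply $B^\infty$, and then import the classical Barratt--Priddy--Quillen/May statement that the delooping of the free $E_\infty$-space on $X$ is $\Sigma^\infty_+X$ (together with uniqueness of infinite loop space machines). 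The paper instead argues purely formally: part (1) says $\Omega^\infty\circ\calK\cong Ri^*$ as functors on homotopy categories; since $\calK$ is an equivalence onto connective spectra, $\Omega^\infty\cong Ri^*\circ\calK^{-1}$, and passing to left adjoints gives $\Sigma^\infty_+\cong\calK\circ i_!$. Your version is more concrete and does not need the equivalence statement from \cite{BaNi}, but it leans on a nontrivial external theorem; the paper's version is shorter and self-contained modulo that equivalence --- indeed, combined with Lemma \ref{keylemma}(1) it effectively re-derives Barratt--Priddy--Quillen rather than assuming it. Both arguments are valid, and the naturality caveats you flag are handled the same way in either approach (by the remark upgrading the identifications to transformations of $\infty$-functors).
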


\begin{proof}
The first statement follows from Lemma \ref{keylemma} and the fact that
$\Omega^{\infty}\calK(D) = \Omega^\infty B^\infty \widetilde{St}(D)$ is homotopy
equivalent to the underlying space of the group completion of
$\widetilde{St}(D)$ by the properties of the delooping machine $B^\infty$.

For the second statement note that the first assertion means that we have an isomorphism
of functors $\Omega^{\infty}\circ \calK \cong Ri^*$ on the homotopy categories.
Since $\calK$ is an equivalence (when restricted to connective spectra) we also have an equivalence 
$\Omega^{\infty} \cong Ri^* \circ \calK^{-1}$. Therefore we get an equivalence
of left adjoint functors $\Sigma^\infty_+ \cong \calK \circ i_!$ where we have
used that $\calK$ is left adjoint to $\calK^{-1}$. 
\end{proof}

\begin{corollary}\label{corollaryeta}
We have $\calK(\eta) \simeq \mathbb{S}$, where $\mathbb{S}$ denotes the sphere
spectrum. Thus
$K_n(\eta) \cong \pi_n^\mathcal{S}$ where $\pi_n^\mathcal{S}$ is the $n$-th
stable homotopy group of spheres. 
\end{corollary}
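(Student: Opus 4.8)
The plan is to apply Theorem \ref{prop53} with $X = \Delta[0]$, so that the second assertion of that theorem directly gives
$\calK(\eta) = \calK(i_!\Delta[0]) \simeq \Sigma^\infty_+\Delta[0]$. Since the geometric realization of $\Delta[0]$ is a single point, the suspension spectrum $\Sigma^\infty_+(\ast)$ is by definition the sphere spectrum $\mathbb{S}$. This settles the first half of the statement.

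For the second half, I would feed this equivalence into part (1) of Theorem \ref{prop53}, which identifies $K_n(D) \cong \pi_n(\calK(D))$ for every dendroidal set $D$. Taking $D = \eta$ and using the equivalence $\calK(\eta) \simeq \mathbb{S}$ just established, we obtain $K_n(\eta) \cong \pi_n(\mathbb{S}) = \pi_n^\mathcal{S}$, the $n$-th stable homotopy group of spheres, which is precisely the claim.

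There is essentially no obstacle here: the corollary is a formal consequence of the two parts of Theorem \ref{prop53} together with the trivial observations that $\eta = i_!\Delta[0]$ (recorded already in Lemma \ref{point}) and that $\Sigma^\infty_+$ of a point is $\mathbb{S}$. If one wanted a more self-contained route avoiding part (2) of the theorem, one could instead combine Lemma \ref{point} — which computes $\widetilde{St}(\eta)$ as the free $E_\infty$-space on one generator, i.e. $\bigsqcup_n B\Sigma_n$ — with the Barratt--Priddy--Quillen theorem, which says that the group completion of $\bigsqcup_n B\Sigma_n$ deloops to the sphere spectrum; then $\calK(\eta) = B^\infty\widetilde{St}(\eta) \simeq \mathbb{S}$ and Lemma \ref{keylemma} gives $i^*\eta_K \simeq \Omega^\infty\mathbb{S} = QS^0$, whose homotopy groups are the stable stems. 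But since Theorem \ref{prop53} is already available in the text, the short argument via that theorem is the natural one to write down.
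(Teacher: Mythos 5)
Your argument is correct and is exactly the one the paper intends: the corollary follows immediately from Theorem \ref{prop53}(2) applied to $X=\Delta[0]$ (giving $\calK(\eta)\simeq\Sigma^\infty_+\Delta[0]\simeq\mathbb{S}$) together with part (1) identifying $K_n(\eta)$ with $\pi_n(\calK(\eta))$. The paper offers no separate proof for this corollary, and your alternative via Barratt--Priddy--Quillen is a reasonable consistency check but not needed.
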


Now we have computed the K-theory for the object  $\eta \in \dSet$ corresponding to the simplest tree $L_0 = |~$. 
Our next goal is to compute it for the objects $\Omega[T]$ corresponding to arbitrary trees $T$.
Therefore we need the following lemma.

\begin{lemma}\label{leafinclusion}
The morphism 
\begin{equation*}
\bigsqcup_{\ell(T)} \eta \longrightarrow \Omega[T]
\end{equation*}
is a stable equivalence of dendroidal sets. Here $\ell(T)$ denotes the set of
leaves of $T$ and the morphism is given by the associated morphisms $\eta \to
\Omega[T]$ for each leaf.
\end{lemma}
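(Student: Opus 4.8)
The plan is to prove this by induction on the number of vertices of $T$, using the long exact sequence associated to cofibre sequences of dendroidal sets together with the fact (Corollary \ref{corollaryeta}) that $K_n(\eta) \cong \pi_n^{\mathcal S}$.

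First I would treat the base cases. If $T = \eta$ (no vertices) the statement is trivial, and if $T = C_n$ is a corolla, the inclusion $\bigsqcup_{n}\eta \to \Omega[C_n]$ picks out the $n$ leaves. Here one should check directly that this map is a stable equivalence; the most conceptual route is to observe that the $n$-corolla has $n+1$ outer faces (the leaves $a_1,\dots,a_n$ and the root $b$), and that $\Omega[C_n]$ is built from $\bigsqcup_{n+1}\eta$ by attaching the single corolla cell along the outer horn $\Lambda^{\mathrm{root}}[C_n] = \bigsqcup_n \eta$. Since the pushout defining $\Omega[C_n]$ from its root horn is along a normal monomorphism, and since filling an outer (root) horn is a stable equivalence --- this is exactly the extra class of equivalences that distinguishes the stable model structure from the Cisinski--Moerdijk one, cf. the fact that $N_dP$ is fully Kan iff $P$ is a Picard groupoid --- one concludes $\bigsqcup_n \eta \hookrightarrow \Omega[C_n]$ is a stable equivalence. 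Equivalently, one can just compute: $\calK(\bigsqcup_n\eta) \simeq \bigvee_n \mathbb S$ and check via Theorem \ref{prop53} and Lemma \ref{point} that $\widetilde{St}(\Omega[C_n])$ has the right homotopy type.

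For the inductive step, write $T$ as obtained by grafting a corolla onto a smaller tree: pick an outer vertex $v$ of $T$, let $T'$ be the tree with $v$ chopped off (an outer face of $T$), and let $C_m$ be the corolla at $v$. Then there is a pushout square of dendroidal sets expressing $\Omega[T]$ as $\Omega[T'] \cup_{\eta} \Omega[C_m]$, where the gluing is along the single edge $\eta$ that is the output of $v$ (a leaf of $T'$) and one of the leaves of... more precisely $\Omega[T] = \Omega[T'] \sqcup_{\eta} \Omega[C_m]$ with $\eta \to \Omega[T']$ the inclusion of the relevant leaf and $\eta \to \Omega[C_m]$ the inclusion of the root of the corolla. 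Applying $\widetilde{St}$, which is left Quillen hence preserves homotopy pushouts of cofibrations (all these maps are normal monomorphisms), and using $\calK = B^\infty\widetilde{St}$ together with the inductive hypotheses for $T'$ (leaves $\ell(T') = (\ell(T)\setminus\{\text{leaves of }v\})\cup\{\text{output of }v\}$) and for $C_m$, one gets a homotopy pushout of connective spectra
\begin{equation*}
\mathbb S \longrightarrow \textstyle\bigvee_{\ell(T')}\mathbb S, \qquad \mathbb S \longrightarrow \textstyle\bigvee_{m}\mathbb S,
\end{equation*}
in which the maps $\mathbb S \to \bigvee \mathbb S$ are wedge-summand inclusions. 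A homotopy pushout of wedges of spheres along summand inclusions is again a wedge of spheres, with the number of summands being $|\ell(T')| - 1 + m = |\ell(T)|$ since grafting replaces one leaf of $T'$ by the $m$ leaves of $C_m$. This identifies $\calK(\Omega[T]) \simeq \bigvee_{\ell(T)}\mathbb S = \calK(\bigsqcup_{\ell(T)}\eta)$ compatibly with the leaf-inclusion map, and since $\calK$ detects stable equivalences (it induces an equivalence of stable homotopy categories), this proves the lemma.

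\textbf{The main obstacle} I anticipate is making the pushout description of $\Omega[T]$ precise and checking that the attaching map $\eta \to \Omega[T']$ really is (up to the identification of $K_0$ in Example \ref{exampleknull}) the inclusion of a single leaf, so that after applying $\widetilde{St}$ we genuinely get a wedge-summand inclusion of spheres rather than some more complicated map --- and relatedly, that the grafting pushout is a homotopy pushout, which needs all four maps to be normal monomorphisms (true, as inclusions of subtrees) and $\widetilde{St}$ to preserve it (true, as $\widetilde{St}$ is left Quillen). An alternative that sidesteps spectra entirely: run the induction using the long exact sequence of the cofibre sequence $\bigsqcup_{\ell(T)}\eta \hookrightarrow \Omega[T] \twoheadrightarrow \Omega[T]/\bigsqcup_{\ell(T)}\eta$, show by a second (nested) induction that the quotient is stably contractible --- it is built from $\eta$'s and corolla cells, each attachment being along an outer horn hence a stable equivalence onto the point after collapsing --- and conclude $K_n(\bigsqcup_{\ell(T)}\eta) \xrightarrow{\sim} K_n(\Omega[T])$ for all $n$, which by Proposition \ref{prop:hk}(3) forces the map to be a stable equivalence.
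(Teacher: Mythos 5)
Your strategy is the same as the paper's -- induction on the number of vertices, with the corolla case handled by the observation that the leaf inclusion $\bigsqcup_n \eta \to \Omega[C_n]$ is (the composite of) an outer/root horn inclusion, hence a trivial cofibration in the stable model structure, and the inductive step handled by a grafting pushout. But there is one genuine gap, and it sits exactly at the point you yourself flag as "the main obstacle": the claim that $\Omega[T]$ \emph{is} the pushout $\Omega[T'] \sqcup_\eta \Omega[C_m]$ is false at the level of dendroidal sets. That strict pushout is the union of the two faces $\Omega[T']$ and $\Omega[C_m]$ inside $\Omega[T]$, and it is a \emph{proper} subobject: it is missing the nondegenerate $T$-dendrex, as well as all dendrices corresponding to composite operations such as $w \circ_c v$ (already for the two-vertex tree in Section \ref{sec_pre}, the union of the two corolla faces does not contain the $3$-corolla face). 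What is true is that the inclusion of this union into $\Omega[T]$ is an operadic (hence stable) equivalence -- but that statement is the nontrivial input, and asserting the homotopy pushout without it is circular. The paper supplies precisely this input via the Segal core lemma of Cisinski--Moerdijk: $Sc[T] := \bigcup_v \Omega[C_{n(v)}] \hookrightarrow \Omega[T]$ is an operadic equivalence, and the Segal cores, unlike the $\Omega[T]$'s themselves, genuinely \emph{are} strict pushouts under grafting, so the induction can be run on them. You need either to quote that lemma or to reprove it by an inner-horn (anodyne) filling argument; without it the inductive step does not close.

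Two smaller points. First, in your spectrum-level computation the map $\eta \to \Omega[C_m]$ along which you glue is the inclusion of the \emph{root} of the corolla, and by Example \ref{exampleknull} it induces $1 \mapsto a_1 + \dots + a_m$ on $K_0$; so after applying $\calK$ it is the "sum" map $\mathbb{S} \to \bigvee_m \mathbb{S}$, not a wedge-summand inclusion as you state. Your count survives anyway, because the cobase change is taken along the \emph{other} leg $\mathbb{S} \to \bigvee_{\ell(T')}\mathbb{S}$, which \emph{is} a summand inclusion by induction, and the pushout of $A \vee \mathbb{S} \leftarrow \mathbb{S} \to Y$ along a summand inclusion is $A \vee Y$ regardless of the second map. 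Second, the detour through $\widetilde{St}$ and $B^\infty$ is unnecessary: once the homotopy pushout is in place, the leaf inclusion is a cobase change of a stable trivial cofibration composed with the inductive equivalence, which is how the paper concludes directly in $\dSet$ -- note also that the paper grafts at the root (one corolla at the bottom, subtrees $T_i$ on top), which arranges for the trivial-cofibration leg of the pushout to be exactly the corolla base case.
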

\begin{proof}
We prove this lemma using a result of \cite{MC10}. For each tree we define the
\textit{Segal core} 
\begin{equation*}
Sc[T] := \bigcup_{v} \Omega[C_{n(v)}] ~ \subset ~ \Omega[T]
\end{equation*}
where the union is over all the vertices of $T$, and $n(v)$ is the number of
input edges
at $v$. For $\Omega[T] = \eta$ we put $Sc[T] := \eta$. Then the inclusion $Sc[T]
\to \Omega[T]$ is a weak operadic equivalence \cite[Proposition 2.4]{MC10}. 

Our morphism obviously factors through the Segal core:
\begin{equation*}
\bigsqcup_{\ell(T)} \eta \longrightarrow Sc[T] \longrightarrow \Omega[T].
\end{equation*}
Thus we have to show that the left morphism is a stable equivalence (note that
the right hand morphism is also a stable equivalence since it is a weak operadic
equivalence). We do this by induction over the number $N$ of vertices of $T$.
For $N=1$ we have 
$Sc[T] = \Omega[T] = \Omega[C_n]$ and therefore the morphism is the inclusion of
leaf colours in the corolla which is an outer horn and hence a stable
equivalence.

Now assume the statement is true for $N \in \mathbb{N}$ and $T$ has $N+1$
vertices. 
We pick the vertex $v_0$ at the root with input colours $a_1,...,a_n$. Moreover
we denote by $T_1,...,T_n$ the trees that sit over its leaves (possibly $T_i =
\eta$ if there are not further vertices over $a_i$). Then we have
a pushout diagram
\begin{equation*}
\xymatrix{
\bigsqcup_{i=1}^n \eta \ar[r]\ar[d]_{\sqcup a_i} & \Omega[C_n]\ar[d]^{v_0} \\
\bigsqcup_{i=1}^{n} Sc[T_i] \ar[r] & Sc[T]
}
\end{equation*}
in $\dSet$. The upper horizontal morphism is a trivial stable cofibration and
therefore also the lower horizontal morphism. Thus in the factorization
\begin{equation*}
\bigsqcup_{\ell(T)} \eta \longrightarrow \bigsqcup_{i=1}^{n} Sc[T_i]
\longrightarrow Sc[T]
\end{equation*}
the left hand morphism is a stable equivalence by the induction hypothesis and
the right hand morphism is a stable equivalence as shown above. 
\end{proof}

\begin{corollary}\label{komega}
We have weak equivalences 
\begin{equation*}
\calK\big(\Omega[T]\big)~ \simeq ~\bigvee_{\ell(T)} \mathbb{S} ~ \simeq~
(\mathbb{S})^{\times \ell(T)}
\end{equation*}
and thus $K_n\big(\Omega[T]\big) \cong \bigoplus_{\ell(T)}\pi_n^\mathcal{S}$
where $\ell(T)$ is the set of leaves of $T$ and $\pi_n^\mathcal{S}$ are the
stable homotopy groups of spheres.
\end{corollary}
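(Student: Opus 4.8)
The plan is to deduce Corollary \ref{komega} directly from Lemma \ref{leafinclusion} together with Theorem \ref{prop53} and Corollary \ref{corollaryeta}, invoking that $\calK$ preserves stable equivalences and turns finite coproducts of dendroidal sets into finite coproducts (= finite products, since $\Spec$ is stable) of spectra. First I would observe that $\calK = B^\infty \widetilde{St}$ is a composite of left adjoints (followed by the delooping machine, which is also a left adjoint up to the relevant homotopy theory), hence $\calK$ preserves colimits, and in particular $\calK\big(\bigsqcup_{\ell(T)}\eta\big) \simeq \bigvee_{\ell(T)} \calK(\eta)$. By Corollary \ref{corollaryeta} we have $\calK(\eta)\simeq \mathbb S$, so this coproduct is $\bigvee_{\ell(T)}\mathbb S$. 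Since $\ell(T)$ is a finite set (trees are finite) and finite wedges of spectra agree with finite products, $\bigvee_{\ell(T)}\mathbb S \simeq (\mathbb S)^{\times \ell(T)}$.

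Next I would apply Lemma \ref{leafinclusion}: the canonical morphism $\bigsqcup_{\ell(T)}\eta \to \Omega[T]$ is a stable equivalence, and by the definition of $\calK$ (which preserves stable equivalences) we get a weak equivalence of spectra $\bigvee_{\ell(T)}\mathbb S \simeq \calK\big(\bigsqcup_{\ell(T)}\eta\big) \iso \calK\big(\Omega[T]\big)$. This establishes the first displayed chain of equivalences. For the statement about homotopy groups, I would invoke Theorem \ref{prop53}(1), which gives $K_n(D)\cong \pi_n(\calK(D))$ for every dendroidal set $D$; applying this to $D = \Omega[T]$ and using that $\pi_n$ commutes with finite wedges (equivalently finite products) of spectra yields
\begin{equation*}
K_n\big(\Omega[T]\big) \cong \pi_n\Big(\bigvee_{\ell(T)}\mathbb S\Big) \cong \bigoplus_{\ell(T)} \pi_n(\mathbb S) = \bigoplus_{\ell(T)} \pi_n^{\mathcal S}.
\end{equation*}

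There is essentially no hard part here; the corollary is a formal consequence of the three named earlier results. The only point deserving a word of care is that $\calK$ genuinely sends coproducts of dendroidal sets to coproducts (wedges) of spectra — this is where one uses that $\widetilde{St}$ is left Quillen (hence its left-derived functor preserves homotopy colimits) and that the delooping/group-completion passage from $E_\infty$-spaces to connective spectra is an equivalence of the relevant homotopy theories, under which the free functor $\Fr_{E_\infty}$ corresponds to $\Sigma^\infty_+$ and thus coproducts go to coproducts; alternatively one can just cite the already-recorded equivalence $\calK \circ i_! \simeq \Sigma^\infty_+$ from Theorem \ref{prop53}(2) together with $\bigsqcup_{\ell(T)}\eta = i_!\big(\bigsqcup_{\ell(T)}\Delta[0]\big)$ and the fact that $\Sigma^\infty_+$ of a finite discrete simplicial set is $\bigvee \mathbb S$. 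Either route makes the argument a two-line chain of citations, so I would keep the write-up correspondingly short.
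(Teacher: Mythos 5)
Your proposal is correct and follows essentially the same route as the paper: reduce via Lemma \ref{leafinclusion} and stable-equivalence invariance of $\calK$ to computing $\calK\big(\bigsqcup_{\ell(T)}\eta\big)$, then use Corollary \ref{corollaryeta} and preservation of (finite) coproducts by $\calK$, the only cosmetic difference being that the paper justifies the coproduct preservation by $\calK$ being an equivalence of homotopy categories rather than by your left-adjoint/$\Sigma^\infty_+$ argument. Both justifications are valid, so nothing further is needed.
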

\begin{proof}
By Lemma \ref{leafinclusion} and the fact that $\calK$ is invariant under stable
equivalences we only need to show that $\calK\big(\bigsqcup \eta\big) \simeq
\bigvee \mathbb{S}$. This is true by Corollary \ref{corollaryeta} together with the fact that $\calK$ is an
equivalence of homotopy-categories and therefore preserves coproducts (which are
computed in connective spectra as in spectra by the wedge sum). 
\end{proof}

\section{Comparison with ordinary algebraic K-Theory}

\newcommand{\cald} {\mathcal{D}}

In this section we want to show how algebraic $K$-theory of dendroidal sets generalizes classical
algebraic $K$-theory of rings. 

Therefore recall that by definition algebraic $K$-theory of a ring $R$ is
computed using its groupoid of finitely generated projective modules (or some
related space like $BGl(R)$). There are several equivalent variants to produce a
spectrum $\mathcal{K}(R)$ from this category. Let us recall an easy
`group-completion' variant here. 

The variant we want to describe does not only work for the category of
finitely generated projective modules over a ring but more generally for an
arbitrary symmetric monoidal category $C$.
The first step is to take the nerve $NC$ which is an $E_\infty$-space and then
apply a delooping machine to get a $K$-theory spectrum $\calK(C) := B^\infty NC$
(see e.g. \cite{th82}). The main purpose of this section is to prove the
following theorem:

\begin{thm}\label{generalization}
For a symmetric monoidal groupoid $C$ we have a weak equivalence of spectra
\begin{equation*}
 \calK\big(N_d C\big) \simeq \calK(C)
\end{equation*}
and therefore also $K_n(N_d C) \cong K_n(C)$. 
\end{thm}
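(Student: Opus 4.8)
The plan is to reduce the statement to Lemma~\ref{keylemma} by identifying the $E_\infty$-space $\widetilde{St}(N_dC)$ with the nerve $NC$ in the homotopy category of $E_\infty$-spaces. Once we know $\widetilde{St}(N_dC) \simeq NC$ as $E_\infty$-spaces, applying the delooping machine $B^\infty$ to both sides immediately gives $\calK(N_dC) = B^\infty\widetilde{St}(N_dC) \simeq B^\infty NC = \calK(C)$, and the statement on homotopy groups follows from Theorem~\ref{prop53}(1) together with the standard fact that $\pi_n B^\infty NC \cong K_n(C)$. So the whole content is in the comparison $\widetilde{St}(N_dC) \simeq NC$.

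To establish this comparison I would proceed as follows. First, I would note that for a symmetric monoidal category $C$ the dendroidal nerve $N_dC$ is built in a canonical way from corollas: the $E_\infty$-structure on $N_dC$ is precisely encoded by the corolla-dendrices, which correspond to the multimorphisms $c_1 \otimes \dots \otimes c_n \to c$ of $C$ (this is already used in the proof of Proposition~\ref{prop:k}(2)). The functor $\widetilde{St}$ is left Quillen and monoidal-compatible in the appropriate sense, so it sends this presentation to the corresponding presentation of an $E_\infty$-space. Concretely, I would compare both $\widetilde{St}(N_dC)$ and $NC$ against the free $E_\infty$-space picture: by Lemma~\ref{point} we know $\widetilde{St}(\eta) \simeq \Fr_{E_\infty}(\Delta[0]) \simeq N(\text{FinSet})$, and more generally by Lemma~\ref{keylemma}(1) we have $\widetilde{St}(i_!X) \simeq \Fr_{E_\infty}(X)$. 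Since $N_dC$ is obtained from copies of $\eta$ (the colours, i.e. the objects of $C$) by gluing in corollas along their outer horns (the multimorphisms of $C$), and $\widetilde{St}$ preserves colimits and operadic equivalences, $\widetilde{St}(N_dC)$ is computed as the corresponding homotopy colimit of free $E_\infty$-spaces. This is exactly the bar-type presentation of $NC$ as an $E_\infty$-space, via the two-sided bar construction / Segal's construction applied to the permutative (or symmetric monoidal) structure on $C$. Matching these two presentations gives the desired equivalence.

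The main obstacle I expect is making the last matching step precise: one must check that the cell structure by which $N_dC$ is assembled from $\eta$'s and corollas is compatible, under $\widetilde{St}$, with the simplicial bar resolution that computes $NC$ as an $E_\infty$-space — in other words, that the combinatorics of the straightening functor on $\Omega$ really does reproduce Segal's gamma-space model of $C$. A clean way to handle this is to invoke naturality and the universal property used in Lemma~\ref{keylemma}(1): both $C \mapsto \widetilde{St}(N_dC)$ and $C \mapsto NC$ are functors $\Symm \to \Ho(\EsSet)$ that preserve the relevant (homotopy) colimits and agree on the generating object (the free symmetric monoidal category on one object, whose nerve is $N(\text{FinSet})$ and whose dendroidal nerve is detected by Lemma~\ref{point}); since every symmetric monoidal category is a colimit of free ones in a suitable sense, the two functors are naturally isomorphic. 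I would also remark that one should be slightly careful about lax versus strong monoidal functoriality, but for the purpose of the objectwise equivalence this does not interfere. With $\widetilde{St}(N_dC) \simeq NC$ in hand, the proof concludes by applying $B^\infty$ as indicated above.
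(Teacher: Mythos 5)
Your overall reduction is the same as the paper's: everything hinges on producing a natural equivalence $\widetilde{St}(N_dC)\simeq NC$ in $\Ho(\EsSet)$, after which $B^\infty$ and Theorem \ref{prop53} finish the argument. The gap is in how you propose to get that equivalence. Your ``clean way'' asserts that $C\mapsto NC$ and $C\mapsto\widetilde{St}(N_dC)$ are functors out of $\Symm$ that \emph{preserve the relevant homotopy colimits} and are therefore determined by their value on the free symmetric monoidal category on one generator. But both $N$ and $N_d$ are \emph{right} adjoints (the paper exhibits their left adjoints $\tau$ and $\tau_d^{\otimes}$ explicitly), so neither composite has any reason to preserve colimits of symmetric monoidal groupoids; and the auxiliary claim that ``every symmetric monoidal category is a colimit of free ones in a suitable sense'' compatible with these functors is exactly the kind of bar-resolution statement that would need proof. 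Likewise, your cell-structure picture is not accurate as stated: $N_dC$ is not obtained from a disjoint union of $\eta$'s by attaching corollas along outer horns --- such a pushout would freely generate composites, whereas the nerve has them specified and satisfies a strict Segal condition; identifying the two up to covariant/stable equivalence is a nontrivial nerve/Segal-core theorem, not something that follows from $\widetilde{St}$ being left Quillen.

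The paper's proof repairs precisely this by dualizing. It passes to the left adjoints $\tau$, $\tau_d^{\otimes}$ and $\widetilde{St}^{-1}$, so that the comparison becomes one between two \emph{left adjoint} functors out of $\EsSet$ with values in $\Symg$. It then invokes the universal property of $\EsSet$ as the free presentable \emph{preadditive} $\infty$-category on one generator: since $\Symg$ is preadditive, left adjoint functors $\EsSet\to\Symg$ are determined by their value on $\Fr_{E_\infty}(\Delta[0])$. The whole theorem then reduces to the single computation of Lemma \ref{point} ($\widetilde{St}(\eta)\simeq\Fr_{E_\infty}(\Delta[0])$) together with the observation that both $\tau\bigl(\Fr_{E_\infty}(\Delta[0])\bigr)$ and $\tau_d^{\otimes}(\eta)$ are the free symmetric monoidal groupoid on one generator. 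If you want to keep your colimit-generation intuition, this is the correct place to apply it: the free object generates $\EsSet$ under colimits, and it is the left adjoints --- not the nerves --- that preserve those colimits. Note also that the preadditivity of $\Symg$ (hence the restriction to groupoids) is doing real work here, which is why the paper confines the easy proof to that case.
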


\begin{remark}
We restrict ourselves here to the case of symmetric monoidal groupoids but the
theorem holds for arbitrary symmetric monoidal categories $C$. However groupoids
are the most important case if we are interested in algebraic $K$-theory of
rings. The reason for our restriction is that the proof
of the theorem as it is stated here is relatively easy and formal whereas in the
case of arbitrary symmetric monoidal categories one has to use explicit models
and the calculation becomes long and technical.
\end{remark}

\begin{corollary}
Let $R$ be a ring and $\text{Proj}_R$ denote the groupoid of finitely generated,
projective $R$-modules. Then we have 
\begin{equation*}
\calK\big(N_d\text{Proj}_R \big) \simeq \calK(R)
\end{equation*}
and therefore $K_n(N_d\text{Proj}_R) = K_n(R)$ where $K_n(R)$ are the algebraic
K-theory groups of $R$.
\end{corollary}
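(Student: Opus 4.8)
The plan is to deduce this straight from Theorem~\ref{generalization}, of which it is essentially a specialization. The groupoid $\text{Proj}_R$ of finitely generated projective $R$-modules and their isomorphisms is a symmetric monoidal groupoid under direct sum $\oplus$ (with unit the zero module), so Theorem~\ref{generalization} applies verbatim with $C = \text{Proj}_R$ and produces a weak equivalence of spectra
\begin{equation*}
\calK\big(N_d\text{Proj}_R\big) \simeq \calK(\text{Proj}_R) = B^\infty N\big(\text{Proj}_R\big).
\end{equation*}
What remains is to identify the right-hand side with the algebraic $K$-theory spectrum $\calK(R)$ of $R$. This is the only genuine input: it is the classical fact that the $E_\infty$-space $N\text{Proj}_R$ (under $\oplus$) group-completes to the algebraic $K$-theory space of $R$, so that $B^\infty N\text{Proj}_R$ is a model for $\calK(R)$. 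I would invoke this from the literature — e.g. \cite{th82}, or the group-completion theorem of Segal and McDuff together with a cofinality argument to pass between free and projective modules — rather than reprove it. In fact, since Section~\ref{sec_spec} has already adopted the group-completion construction $\calK(C) := B^\infty NC$ as its working definition of $\calK(R)$, at that level of bookkeeping the identification $\calK(\text{Proj}_R) = \calK(R)$ is close to being a definition, and the genuine content lies entirely in Theorem~\ref{generalization}.

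For the statement on homotopy groups I would combine the spectrum-level equivalence just obtained with Theorem~\ref{prop53}(1), which gives $K_n(D) \cong \pi_n\calK(D)$ for every dendroidal set $D$. Applying this to $D = N_d\text{Proj}_R$ and using the defining property $\pi_n\calK(R) \cong K_n(R)$ of the algebraic $K$-groups in this model yields $K_n(N_d\text{Proj}_R) \cong K_n(R)$. Alternatively one can quote $K_n(N_dC) \cong K_n(C)$ from Theorem~\ref{generalization} directly and then use $K_n(\text{Proj}_R) = K_n(R)$.

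The only points requiring care — and the closest thing to an obstacle — are checking that the hypotheses of Theorem~\ref{generalization} are actually met and being content to cite rather than reprove the classical comparison. Concretely: one must note that $\text{Proj}_R$ is a \emph{groupoid} (every morphism is invertible, since it is the maximal subgroupoid of finitely generated projective modules), so that the groupoid version of the theorem as stated applies; and one must accept the standard identification of the naive group-completion delooping $B^\infty N\text{Proj}_R$ with Quillen's algebraic $K$-theory spectrum of $R$. Neither is serious: the first is immediate, and the second is thoroughly documented in the $K$-theory literature.
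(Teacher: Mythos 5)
Your proposal is correct and is exactly the argument the paper intends: the corollary is stated without proof as an immediate specialization of Theorem~\ref{generalization} to $C = \text{Proj}_R$, using the paper's adopted group-completion model $\calK(C) := B^\infty NC$ (and the classical identification of $B^\infty N\text{Proj}_R$ with Quillen's $\calK(R)$). Your additional care about $\text{Proj}_R$ being a groupoid and about citing the classical comparison is appropriate but does not change the route.
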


\begin{lemma}
Let $\Symg$ denote the category of small symmetric monoidal
group\-oids (which is really a 2-category). Then the functors 
\begin{equation*}
N: \Symg \to \EsSet \qquad \text{and} \qquad  N_d: \Symg \to \dSet_{cov}
\end{equation*}
both admit left adjoint functors at the level of homotopy-categories (resp. $\infty$-categories). 
Here we take the covariant model structure on dendroidal sets.
\end{lemma}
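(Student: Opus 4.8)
The plan is to exhibit both left adjoints by the adjoint functor theorem for presentable $\infty$-categories, reducing the problem to checking that $N$ and $N_d$ preserve limits and are accessible. First I would observe that all four categories in question are presentable $\infty$-categories: $\Symg$ is presentable (symmetric monoidal groupoids are models for grouplike or ungrouplike $E_\infty$-groupoids depending on conventions, but in any case a locally presentable $2$-category), $\EsSet$ is presentable since it is the category of algebras over the Barratt--Eccles operad in simplicial sets with its induced model structure, and $\dSet_{cov}$ is presentable since it is a left Bousfield localization of the presheaf category $\dSet = [\Omega^{op},\Set]$. Hence by the adjoint functor theorem (see \cite[Corollary 5.5.2.9]{HTT}) it suffices to show that the right-hand functors $N$ and $N_d$ preserve limits and are accessible; accessibility is automatic once we know both functors are themselves right derived functors of right Quillen functors, which I would verify directly.

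Second I would identify the relevant Quillen adjunctions. For $N\colon \Symg \to \EsSet$: a symmetric monoidal groupoid is the same data as a grouplike-free $E_\infty$-monoid in groupoids, and the ordinary nerve $N$ is right adjoint to the fundamental-groupoid-type functor; more concretely one uses that the nerve $\Symg \to \EsSet$ is (the derived functor of) the right adjoint in a Quillen adjunction whose left adjoint sends an $E_\infty$-space to its ``homotopy'' symmetric monoidal groupoid $\Pi_1$ together with its induced symmetric monoidal structure. For $N_d\colon \Symg \to \dSet_{cov}$: recall from Section \ref{sec_pre} that $N_d$ on symmetric monoidal categories is fully faithful, and that the covariant model structure on $\dSet$ has fibrant objects closely tied to $E_\infty$-spaces; I would invoke \cite{Heuts1} (cited in the introduction), which shows symmetric monoidal $\infty$-categories embed into dendroidal sets, to realize $N_d$ as a right Quillen functor between appropriate model structures. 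In both cases the right adjoint visibly preserves products, pullbacks and filtered colimits (the underlying functor is computed objectwise on presheaves, resp. on underlying spaces), so the hypotheses of the adjoint functor theorem are met.

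The main obstacle I anticipate is not the abstract adjoint functor machinery but pinning down precisely which model structure on $\dSet$ and which notion of ``symmetric monoidal groupoid $\infty$-category'' make $N_d$ genuinely right Quillen --- in particular whether one wants the covariant model structure over $\ast$ (whose fibrant objects are roughly dendroidal Kan complexes fibered over a point) versus the fully Kan / stable structure, since the lemma explicitly picks the covariant one. The fully faithfulness of $N_d$ at the level of $1$-categories does not immediately upgrade to a homotopically meaningful statement, so I would need the comparison results of \cite{Heuts1} and \cite{Heuts2} to bridge this gap. Once the correct model structures are fixed, the rest is formal: presentability plus limit-preservation plus accessibility yields the left adjoints, and one checks they agree at the level of homotopy categories by passing to homotopy categories of the underlying model categories. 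I would also remark that for the purposes of the following Theorem \ref{generalization} only the existence of these left adjoints and their compatibility with $\St$ will be used, so we need not produce an explicit pointwise formula for either left adjoint.
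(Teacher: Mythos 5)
Your route is genuinely different from the paper's. The paper does not invoke the adjoint functor theorem at all: it simply \emph{writes down} both left adjoints explicitly. The left adjoint of $N$ is the $1$-truncation $\tau$, sending an $E_\infty$-space $X$ to the free groupoid on its $0$- and $1$-simplices modulo relations from $2$-simplices, with the symmetric monoidal structure inherited from the $E_\infty$-structure; the left adjoint of $N_d$ is a symmetric monoidal groupoid version $\tau_d^{\otimes}$ of the Moerdijk--Weiss functor $\tau_d$, generated by the objects of $D$ with an isomorphism $d_1\otimes\cdots\otimes d_n \to d$ for each $n$-corolla and relations from two-vertex corollas. The adjunctions are then checked directly on generators. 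Your approach buys generality and avoids combinatorics, but the paper's approach buys the explicit formulas.

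That difference matters, and it is where your proposal has a genuine gap in two places. First, the entire weight of the adjoint functor theorem argument rests on showing that $N$ and, especially, $N_d$ are accessible limit-preserving functors of $\infty$-categories --- in practice, that $N_d\colon \Symg \to \dSet_{cov}$ is (the right derived functor of) a right Quillen functor for the covariant model structure. You correctly identify this as ``the main obstacle'' but then do not resolve it; checking limit-preservation ``objectwise on presheaves'' only addresses the underlying $1$-categories, not homotopy limits in the localized $\infty$-categories. Since this is precisely the nontrivial content of the lemma under your strategy, the proof is incomplete as it stands. Second, your closing remark that only the \emph{existence} of the left adjoints is needed for Theorem \ref{generalization} is not accurate: the paper's proof of that theorem evaluates $\tau$ and $\tau_d^{\otimes}$ on the free $E_\infty$-space $\Fr_{E_\infty}(\Delta[0])$ and on $\eta$ respectively, and identifies both values with the free symmetric monoidal groupoid on one generator (finite sets and bijections). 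With only an abstract existence statement you would have to supply this computation by another argument --- e.g.\ by composing adjunctions with $\Fr_{E_\infty}\dashv U$ and $i_!\dashv i^*$ to see that the left adjoints carry free objects to free objects --- which is doable but must be said; otherwise the downstream proof does not go through.
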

\begin{proof}
We explicitly describe both left adjoint functors. The first is well known and the second is 
a variant of a functor considered by Moerdijk-Weiss \cite[section 4]{MoerW07}.

The left adjoint of $N: \Ho(\Symg) \to \Ho(\EsSet) $ is the `1-truncation'
functor $\tau$. Let $X$ be an $E_\infty$-space, i.e. a simplicial set with the
structure of an algebra over the Barratt-Eccles operad. Then $\tau(X)$ is the free
groupoid generated by 0-simplices of $X$ as objects and 1-simplices of $X$ as
morphisms subject to the relations generated by 2-simplices of $X$.
The groupoid $\tau(X)$ inherits the structure of a
symmetric monoidal category given on generators by the $E_\infty$-structure of
$X$. The assignment $X \mapsto \tau(X)$ even defines a functor 
\begin{equation*}
\tau: \EsSet \to \Symg
\end{equation*}
which sends weak homotopy equivalences of $E_\infty$-spaces to equivalences of symmetric
monoidal groupoids and which is left adjoint to $N$ as functors of
$\infty$-categories, hence also on the level of homotopy categories.

The left adjoint of $N_d: Ho(\Symg) \to Ho(\dSet_{cov})$ can be described
very similarly. For a dendroidal set $D$ it is given by the free symmetric
monoidal groupoid generated by the objects of $D$ and for
each $n$-corolla from $d_0,...,d_n$ to $d$ there is an isomorphism $d_0 \otimes
... \otimes d_n \to d$. The relations are given by corollas with two vertices.
This is just the symmetric monoidal groupoid version of the functor $\tau_d$
described in \cite[section 4]{MoerW07}. We denote this functor by 
\begin{equation*}
\tau_d^{\otimes}: \dSet \to \Symg
\end{equation*} 
It is easy to see that this functor descends to the level of infinity categories. By construction
it is clear that the functor is then an adjoint on the level of $\infty$-categories.
\end{proof}

\begin{proof}[Proof of Theorem \ref{generalization}.] 
The proof is based on the fact that both K-theory spectra are defined 
similarly. The K-theory spectrum $\calK(N_dC)$ is defined as the spectrum associated to
the $E_\infty$-space $\widetilde{St}(N_dC)$ and the K-theory spectrum of $C$ as the spectrum associated to
the $E_\infty$-space $NC$. Thus we only have to show that for a given symmetric monodical groupoid there is a (natural)
equivalence of $E_\infty$-spaces $\widetilde{St}(N_dC) \cong NC$. In other words
we want to show that the diagram
\begin{equation}\label{diagram}
\xymatrix{
 & \Ho(\Symg) \ar[ld]_{N_d}\ar[rd]^{N} & \\
\Ho(\dSet_{cov}) \ar[rr]^{\widetilde{St}} && \Ho(\EsSet)
}\end{equation}
commutes. Here $\Symg$ denotes the category of symmetric monoidal
groupoids and $\dSet_{cov}$ indicates that we are working with the covariant model structure on dendroidal sets. In this case
the lower horizontal functor is an equivalence.\\

In order to show the commutativity of the above diagram we use the fact that all
functors in this diagram admit left adjoints. The left adjoints of the upper two
functors are described in the last lemma and 
the left adjoint of $\widetilde{St}$ is the inverse $\widetilde{St}^{-1}$. The
commutativity of the above diagram \eqref{diagram} translates into a natural
equivalence $\tau_d^{\otimes} \circ \widetilde{St}^{-1} \cong \tau$ of 
left adjoint functors 
\begin{equation}\label{diagram22}
\xymatrix{
 & \Ho(\Symg)  & \\
\Ho(\dSet_{cov}) \ar[ru]^{\tau^\otimes_d} && \Ho(\EsSet)\ar[lu]_{\tau}\ar[ll]_{\widetilde{St}^{-1}}
}\end{equation}

As a next step we want to use the universal property of the $\infty$-category of
$E_\infty$-spaces established in \cite{Gep}: it is the free presentable, preadditive $\infty$-category on
one generator.
Let us explain what this means. First an $\infty$-category $\cald$ is 
called preadditive if finite products and coproducts agree, more precisely if it is pointed and the canonical map from the coproduct to the product is an equivalence. Let $\cald$ be a presentable,
preadditive $\infty$-category. Then the universal property of $\EsSet$ is that 
there is an equivalence of $\infty$-categories
\begin{equation*}
\text{Fun}^L(\EsSet, \cald) \simeq \cald
\end{equation*}
where $\text{Fun}^L$ denotes the category of left adjoint functors. The 
equivalence is given by evaluation on the free $E_\infty$-algebra on one
generator $\text{F} := \Fr_{E_\infty}(\Delta[0])$. In particular if we have two left 
adjoint functors $A,B:\EsSet \to \cald$ then they are naturally equivalent if and only if there is an
equivalence of $A(\text{F}) \to B(\text{F})$ in $\cald$. 

In our case we first observe that $\Symg$ is clearly preadditive. Thus in
order to compare the left adjoint functors  $\tau_d^{\otimes} \circ
\widetilde{St}^{-1}$ and $\tau$ we only have to show that they assign equivalent
symmetric monoidal groupoids to the free $E_\infty$-algebra on the point. Now
the right functor $\tau$ is easy to evaluate since $\text{F} = \Fr_{E_\infty}(\Delta[0])$ is the nerve of the
category of finite sets with isomorphisms. Thus $\tau( \text{F})$ is the category of
finite sets with isomorphisms and tensor product given by disjoint union.
Observe that this is also the free symmetric monoidal category on one generator.

The functor 
$\widetilde{St}^{-1}$ evaluated on $\text{F}$ is given by $\eta$ as shown in Lemma
\ref{point}. Thus $\tau_d \circ \widetilde{St}^{-1}$ of $\text{F}$ is given by the free
symmetric monoidal category on one generator and hence equivalent to $\tau(F)$. 
Together we have shown that the
two functors $\tau_d \circ \widetilde{St}^{-1}$ and $\tau$ agree on $\text{F}$ and together with the universal property of
$\EsSet$ this completes the proof.
 
\end{proof}

\section{Appendix: the straightening functor of Heuts} \label{appendix}

In this appendix we want to collect some facts about the dendroidal
straightening functor which has been defined and studied by Heuts
\cite{Heuts1}. 
We do not describe the most general instance of the straightening functor, but
only the variant which is used in this paper. First we have to fix some notation. By
$E_\infty$ we denote the Barratt-Eccles operad. This is a simplicially
enriched operad with one colour and the simplicial set of operations is given by
\begin{equation*}
E_\infty(n) = E\Sigma_n = \Sigma_n // \Sigma_n \ .
\end{equation*}
Here $\Sigma_n$ is the permutation group on $n$ letters and $\Sigma_n //
\Sigma_n$ is the simplicial set given as the nerve of the action groupoid of
$\Sigma_n$ on itself by right multiplication. 

Now there is an adjunction between dendroidal sets and simplicially enriched
operads
\begin{equation} \label{nerve_adj}
hc\tau_d: \xymatrix{\dSet \ar@<0.3ex>[r] & \sOper: hcN_d \ar@<0.7ex>[l]}
\end{equation}
which was introduced in \cite{MoerW07} and studied in much more detail in 
 \cite{MC11}. 
 The left adjoint is completely determined on representables since $\dSet$ is a presheaf category. 
 The definition of $hc\tau_d$ is 

\begin{equation*}
hc\tau_d(\Omega[T]) := W\Omega(T)
\end{equation*}
where $W$ is the Boardman-Vogt resolution of operads \cite{BoarVogt}. Let us
describe the simplicially enriched operad $W\Omega(T)$ explicitly here:
Its colours are the edges of $T$. For edges $c_1,...,c_n,c$ the simplicial set
of operations $W\Omega(T)(c_1,...,c_n; c)$ is given by
\begin{equation*}
W\Omega(T)(c_1,...,c_n; c) =  \Delta[1]^{i(V)} 
\end{equation*}
if there is a maximal subtree $V$ of $T$ with leaves $c_1,...,c_n$ and root $c$.
In this case $i(V)$ denotes the number of inner edges of the uniquely determined
subtree $V$. If there is no such subtree 
we set $W\Omega(T)(c_1,...,c_n; c) =  \emptyset$. The composition in
$W\Omega(T)$ is
given by grafting trees, assigning length 1 to the newly arising inner edges.
See Remark 7.3 of \cite{MW09} for a more detailed description of the
composition. 

There is a distinguished algebra $A_T$ for the operad $W\Omega(T)$ which will
play an important role in the definition of the straightening later. The value
of $A_T$ on the colour $c$ (which is an edge of $T$) is given by:
\begin{equation*}
A_T(c) := \Delta[1]^{i(c)}
\end{equation*}
where $i(c)$ is the number of edges over $c$ in $T$. The structure maps of $A_T$
as an $W\Omega(T)$-algebra
\begin{equation*}
W\Omega(T)(c_1,...,c_n; c) \times A_T(c_1) \times ... \times A_T(c_n)  \to
A_T(c)
\end{equation*}
are given by grafting trees, assigning length 1 to the newly arising inner edges
$c_1, . . . , c_n$. \\

In fact Moerdijk and Cisinski have shown that the adjunction \eqref{nerve_adj} is
a Quillen equivalence between the Cisinski-Moerdijk model structure on $\dSet$
and an appropriate model structure on simplicially enriched, coloured operads.
We now apply the homotopy coherent dendroidal nerve to the Barratt-Eccles operad
and obtain a dendroidal set 
\begin{equation*}
\mathcal{E}_\infty := hcN_d(E_\infty).
\end{equation*}
Let $\dSet/\mathcal{E}_\infty$ denote the category of dendroidal sets over
$\mathcal{E}_\infty$. The category of $\EsSet$ is the category of algebras for
the Barratt-Eccles operad in the category of simplicial sets (so space here
 means simplicial set). 

We are now ready to give the definition of the straightening functor\begin{equation*}
St_{E_\infty}:  \dSet / \mathcal{E}_\infty \to {\EsSet}\  .
\end{equation*}
The original definition of the straightening functor (which is a significantly more general variant)
 is given in \cite[Section 2.2]{Heuts1}. First we remark that the category
$\dSet / \mathcal{E}_\infty$ is freely generated under colimits by objects of
the form
$\Omega[T] \stackrel{s}{\to} \mathcal{E}_\infty$, where  $T$ is a tree and 
$s$ and arbitrary morphism.  
We will define $St_{E_\infty}$ for those objects
and then left Kan extend it to the whole category $\dSet /
\mathcal{E}_\infty$.

In order to define $St_{\mathcal{E}_\infty}\big(\Omega[T] \stackrel{s}\to
\mathcal{E_\infty})$ we use the fact that by adjunction the morphism $s: \Omega[T] \to
\mathcal{E}_\infty$ uniquely determines a morphism $\tilde{s}: W\Omega[T] \to
E_\infty$. This morphism $\tilde{s}$ then induces an adjunction
\begin{equation*}
\xymatrix{\tilde{s}_!: ~  W\Omega(T)\text{-}\mathcal{S}\text{paces} \ar@<0.3ex>[r] & \EsSet
~:\tilde{s}^*  \ar@<0.7ex>[l]}
\end{equation*}
where the right adjoint $\tilde{s}^*$ is given by pullback along $\tilde{s}$.
Finally we can define 
\begin{equation*}
St_{E_\infty}\big(\Omega[T] \stackrel{s}\to \mathcal{E_\infty}) :=
\tilde{s}_!(A_T).
\end{equation*} 
where $A_T$ is the $W\Omega(T)$-algebra defined above. Together this defines the
desired functor $St_{E_\infty}$ by left Kan extension. \\

The straightening functor as defined above has nice homotopical properties (see
the next proposition). But before we describe these properties, we want to get
rid of the overcategory $\dSet/\mathcal{E}_\infty$. There is an easy way to do
this, by considering the functor $\dSet \to \dSet/\mathcal{E}_\infty$ which is
defined by $D \mapsto D \times \mathcal{E}_\infty$. It turns out that this
functor has both adjoints. Replacing a dendroidal set $D$ by the dendroidal set
$D \times \mathcal{E}_\infty$ amounts to cofibrant replacement. This is
basically because $\mathcal{E}_\infty$ is a cofibrant resolution of the point.
Thus homotopically the category $\dSet/\mathcal{E}_\infty$ is equivalent to $\dSet$
itself. Therefore we define the functor $\widetilde{St}$ as the composition
$\dSet \to \dSet/\mathcal{E}_\infty \to \EsSet$, concretely $\widetilde{St}(D)
:= St_{E_\infty}(D \times \mathcal{E}_\infty)$. 
\begin{prop}[Heuts]
\begin{enumerate}
\item The functors
\begin{equation*}
\widetilde{St}:  \dSet \to \EsSet\qquad \text{and} \qquad St_{E_\infty}: 
\dSet/\mathcal{E}_\infty \to \EsSet 
\end{equation*}
are left Quillen with respect to the Moerdijk-Cisinski model structure and the
usual model structure on $\EsSet$ (in which the fibrations and weak equivalences
are taken to be those of the underlying simplicial sets). Moreover both functors
send operadic equivalences to weak homotopy equivalences.

\item There are further model structures on $\dSet$ and
$\dSet/\mathcal{E_\infty}$ called the covariant model structures. For these
model structure the above functors are Quillen equivalences.

\end{enumerate}
\end{prop}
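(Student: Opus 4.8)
The proposition is due to Heuts \cite{Heuts1}; here is the structure of the argument. Write $F:=St_{E_\infty}\colon\dSet/\mathcal{E}_\infty\to\EsSet$. Being defined by left Kan extension from the representables $(\Omega[T]\xrightarrow{s}\mathcal{E}_\infty)$, the functor $F$ is a left adjoint, with right adjoint the ``unstraightening'' $Un_{E_\infty}$; since $D\mapsto D\times\mathcal{E}_\infty$ is itself a left adjoint, $\widetilde{St}=F(-\times\mathcal{E}_\infty)$ is a left adjoint as well. Every assertion about $\widetilde{St}$ follows from the corresponding one about $F$: the projection $D\times\mathcal{E}_\infty\to D$ is an operadic equivalence, so $f\times\mathcal{E}_\infty$ is an operadic equivalence whenever $f$ is, and moreover $D\times\mathcal{E}_\infty$ is always normal. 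To prove part (1) it thus suffices to show that $F$ preserves normal monomorphisms and operadic equivalences, since then it preserves trivial cofibrations --- being both --- and is left Quillen; on $\widetilde{St}$, which factors through normal objects, preservation of operadic equivalences may alternatively be read off from Ken Brown's lemma.

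Preservation of normal monomorphisms is the technical heart. Since $F$ preserves colimits and normal monomorphisms are generated by the boundary inclusions $\partial\Omega[T]\hookrightarrow\Omega[T]$ (equipped with arbitrary structure map to $\mathcal{E}_\infty$), it is enough that each $F(\partial\Omega[T])\to F(\Omega[T])$ be a cofibration of $E_\infty$-spaces, equivalently of simplicial sets after forgetting the algebra structure. One works from the explicit description $F(\Omega[T],s)=\tilde s_!(A_T)$ with $A_T(c)=\Delta[1]^{i(c)}$ and structure maps given by grafting along length-$1$ inner edges. Writing $\partial\Omega[T]$ as the colimit of its faces and tracking the induced subobject $F(\partial\Omega[T])\subseteq F(\Omega[T])$ colour by colour, one finds that it is obtained from $F(\Omega[T])$ by deleting the appropriate top cells of the cubes $\Delta[1]^{i(c)}$, and, by induction on the number of vertices of $T$, that $F(\partial\Omega[T])\to F(\Omega[T])$ is built from iterated pushouts of cube boundary inclusions $\partial(\Delta[1]^n)\hookrightarrow\Delta[1]^n$. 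Making the correspondence between the faces of $T$ and the faces of these cubes precise under the grafting formula is the delicate bookkeeping where essentially all the work resides; this is the main obstacle. For preservation of operadic equivalences one checks that $F$ carries a generating family of operadic trivial cofibrations --- by the construction in \cite{MC10} one may take the inner horn inclusions $\Lambda^e[T]\hookrightarrow\Omega[T]$ together with the Segal core inclusions $Sc[T]\hookrightarrow\Omega[T]$ --- to trivial cofibrations of $E_\infty$-spaces, and then invokes that the class of maps with this property is closed under pushout, transfinite composition and retract, since $F$ preserves those colimits. On an inner horn the contraction of $e$ collapses the corresponding $\Delta[1]$-factor of the cubes $A_T(c)$, exhibiting $F(\Lambda^e[T])\to F(\Omega[T])$ as a deformation retract; on a Segal core inclusion one reads off from the cube description that $F(Sc[T])\to F(\Omega[T])$ is a weak homotopy equivalence; and a direct computation gives that $F(\eta)$ is the free $E_\infty$-space on one generator.

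For part (2), the covariant model structures on $\dSet$ and $\dSet/\mathcal{E}_\infty$ have the same cofibrations (normal monomorphisms) and are left Bousfield localizations of the operadic ones, so $F$ remains left Quillen for them once one verifies that $Un_{E_\infty}$ sends $E_\infty$-spaces to covariantly fibrant objects --- i.e. to left fibrations over $\mathcal{E}_\infty$ --- which is a direct check of the lifting property. To upgrade the Quillen pair to a Quillen equivalence one shows that the derived unit and counit are weak equivalences. For the counit one uses that the free $E_\infty$-spaces $\Fr_{E_\infty}(X)$ generate $\EsSet$ under the appropriate homotopy colimits, that $F$ sends $i_!X$ to $\Fr_{E_\infty}(X)$ (left adjoints out of $\sSet$ being determined on $\Delta[0]$, as in \cite{HTT}), and that $F(\eta)$ is free on one generator by the previous paragraph; this reduces the claim to $X=\Delta[0]$. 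For the unit one reduces to the representables $(\Omega[T]\to\mathcal{E}_\infty)$ and then, via Segal-core and root-horn decompositions of $\Omega[T]$, to the single case $\eta$, where the unit is checked by hand. The genuinely awkward point in part (2) is that covariant equivalences are defined only indirectly, so one must either prove a fibrewise recognition criterion in the spirit of Lurie's straightening theorem \cite{HTT} or, as above, reduce every covariant equivalence to the generating root-horn inclusions, on which $F$ is computable.
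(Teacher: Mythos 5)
First, note that the paper itself contains no proof of this proposition: it is stated in the appendix as an imported result attributed to \cite{Heuts1}, so there is no argument in the text to compare yours against. Judged on its own terms, your sketch reproduces the broad architecture of Heuts' construction correctly: $St_{E_\infty}$ is a left adjoint because it is defined by left Kan extension off the representables, $\widetilde{St}$ inherits this, preservation of normal monomorphisms reduces to the boundary inclusions and to cube-boundary bookkeeping with the algebras $A_T(c)=\Delta[1]^{i(c)}$, and the identifications on $\eta$ and on $i_!X$ are exactly the ones the present paper extracts in Lemma \ref{point} and Lemma \ref{keylemma}.

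There is, however, one concrete gap in your part (1). You propose to verify preservation of operadic trivial cofibrations on a ``generating family'' consisting of the inner horn inclusions $\Lambda^e[T]\hookrightarrow\Omega[T]$ and the Segal core inclusions $Sc[T]\hookrightarrow\Omega[T]$. No such explicit generating set is available: the Cisinski--Moerdijk model structure is produced by Cisinski's localization machinery, and its trivial cofibrations form a class strictly larger than the saturation of the inner anodynes (of which the Segal core inclusions are already members, by \cite[Proposition 2.4]{MC10}). For instance $\eta\to N_d(J)$, with $J$ the contractible groupoid on two objects, is an operadic trivial cofibration that is not built from inner horns and Segal cores, for the same reason that $\Delta[0]\to J$ is not inner anodyne in the Joyal model structure. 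So the argument as written only shows that $St_{E_\infty}$ sends inner anodynes to trivial cofibrations, which does not yet yield the left Quillen property. The standard repairs are either (i) to use the recognition criterion for left Quillen functors --- preserve cofibrations and check that the right adjoint (unstraightening) preserves fibrant objects and fibrations between fibrant objects --- or (ii) to prove the covariant statement of part (2) first and observe that, since the covariant model structure is a left Bousfield localization of the operadic one with the same cofibrations, being left Quillen for the localized structure implies it for the unlocalized one; the assertion that arbitrary operadic equivalences go to weak homotopy equivalences then follows from Ken Brown's lemma, using that every object of $\dSet/\mathcal{E}_\infty$ is normal because it admits a map to the normal object $\mathcal{E}_\infty$. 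The remainder of your outline for part (2) --- reducing unit and counit to $\eta$ via the fact that left adjoints out of $\sSet$ are determined on the point --- is consistent with how the paper itself argues in Lemma \ref{keylemma}, though the actual verification of the derived unit on the representables is where \cite{Heuts1} does substantial work that your sketch compresses into one sentence.
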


\begin{variant}
There are variants of the straightening functor for different choices of
$E_\infty$-operads than the Barratt-Eccles operad. More precisely if $E$ is any
simplicial $E_\infty$-operad which is cofibrant, then there is a corresponding
straightening functor $$St_E: \dSet/hcN_d(E) \to E\text{-spaces}$$ Conversely
for any choice of cofibrant resolution of the point $\mathcal{E} \to *$ in
dendroidal sets, there is a straightening functor $$St: \dSet/\mathcal{E} \to
hc\tau_d(E)\text{-spaces}$$ These functors are defined with the
same formulas as above and are all essentially equivalent to the variant using
the Barratt-Eccles operad.
\end{variant}

\bibliographystyle{alpha}
\bibliography{Alles}

\end{document}